\newtheorem{theorem}{Theorem}[section]
\newtheorem{lemma}[theorem]{Lemma}
\newtheorem{proposition}[theorem]{Proposition}
\newtheorem{corollary}[theorem]{Corollary}
\newtheorem{definition}[theorem]{Definition}
\theoremstyle{remark}
\newtheorem{remark}[theorem]{Remark}
\newtheorem{Example}[theorem]{Example}
\numberwithin{equation}{section}
\newcommand{\cc}{\mathbf{c}}
\newcommand{\aA}{\mathbb{A}}
\newcommand{\CC}{\mathbb{C}}
\newcommand{\cO}{\mathcal{O}}
\newcommand{\RR}{\mathbb{R}}
\DeclareMathOperator{\spec}{Spec}
\DeclareMathOperator{\Coh}{Coh}
\DeclareMathOperator{\ob}{ob}
\DeclareMathOperator{\eu}{Eu}
\DeclareMathOperator{\loc}{loc}
\DeclareMathOperator{\KL}{KL}
\DeclareMathOperator{\Gr}{Gr}
\DeclareMathOperator{\rk}{rk}
\DeclareMathOperator{\tg}{tg}
\newcommand{\vir}{\text{\rm vir}}
\newcommand{\ZZ}{\mathbb{Z}}
\newcommand{\LL}{{\mathbb L}}
\newcommand{\QQ}{{\mathbb Q}}
\newcommand{\bP}{{\mathbf P}}
\begin{document}

\title{Note on MacPherson's local  Euler obstruction}

\author[Jiang]{Yunfeng Jiang}
\address{Department of Mathematics\\
University of Kansas\\
405 Snow Hall\\
1460 Jayhawk Blvd\\
Lawrence 66045 
\\USA}
\email{y.jiang@ku.edu}

\thanks{}
  
\subjclass[2010]{Primary 14N35; Secondary 14A20}

\keywords{}

\date{}

\begin{abstract}
This is a note on MacPherson's local Euler obstruction, which plays an important role recently in Donaldson-Thomas theory by the work of Behrend. 

We introduce MacPherson's original definition, and prove that it is equivalent to the algebraic definition used by Behrend, following the method of Gonzalez-Sprinberg.  We also give a formula of the local Euler obstruction in terms of Lagrangian intersections.  As an application, we consider a scheme or DM stack $X$ admitting a symmetric obstruction theory.  Furthermore we assume that there is  a $\CC^*$ action on $X$, which makes the obstruction theory $\CC^*$-equivariant. The $\CC^*$-action on the obstruction theory naturally gives rise to a cosection map in the sense of Kiem-Li. We prove that Behrend's weighted Euler characteristic of $X$ is the same as Kiem-Li localized invariant of $X$ by the $\CC^*$-action. 
\end{abstract}

\maketitle

\section{Introduction}
 
\subsection{Local Euler obstruction}
Local Euler obstruction of a scheme or a variety was originally introduced by MacPherson \cite{MacPherson} to study Chern class of singular algebraic varieties.  The definition of the local  Euler obstruction is given by topological and geometrical method in Section 3 of \cite{MacPherson}, by using the Nash blow-up.   In \cite{Sprinberg}, Gonzalez-Sprinberg
proves an algebraic formula using Segre class of normal cone of a closed subscheme inside a scheme. 

Let $Z$ be a prime cycle in a scheme  or DM stack $X$, the local Euler obstruction $\eu(Z)$ is a constructible function on $X$, and all the local Euler obstructions form a basis in the group $F(X)$ of constructible functions on $X$.  Furthermore, MacPherson proves that there is a unique transformation functor from the functor of constructible functions to the functor of homology groups satisfying the pushforward property, such that the homology class of the constant function $\mathds{1}_{X}$ for a smooth scheme $X$ is $c(X)\cap [X]$. 

A global characteristic class for the local Euler obstruction $\eu(Z)$ is the Chern-Mather class $c^{M}(Z)$ also introduced in Section 2 of \cite{MacPherson}.  This class is the pushforward $\nu_*(c(TZ)\cap [\widehat{Z}])\in A_*(Z)$ where $\nu: \widehat{Z}\to Z$ is the Nash blow-up and $TZ$ is the Nash tangent bundle. Global index theorem of MacPherson says that 
$$\int_{X}c^M(Z)=\chi(X, \eu(Z)),$$
if the scheme $X$ is proper, 
where $\chi(X, \eu(Z))$ is the weighted Euler characteristic weighted by the constructible function $\eu(Z)$. 
In the case that $Z=X$ and $X$ is smooth, Chern-Mather class $c^M(Z)$ is just the Chern class and $\chi(X, \eu(Z))$ is just the topological Euler characteristic of $X$. This is the Gauss-Bonnet theorem. 

In this paper we review the definition of  MacPherson on Euler obstruction   using topological method, and we prove that it is equivalent to the algebraic formula in the Theoreme of \cite{Sprinberg} following the arguments of Gonzalez-Sprinberg.  Note that \cite{Sprinberg} was written in French, and the translation proof may be helpful for  English  readers.

\subsection{Donaldson-Thomas theory}

The local Euler obstruction recently becomes very important in the study of Donaldson-Thomas theory by the work of Behrend \cite{Behrend}.  In \cite{Behrend}, Behrend defines a canonical cycle $\mathfrak{c}_X\in \mathcal{Z}_*(X)$ for a scheme or DM stack $X$, which Behrend calls the sign support of the intrinsic normal cone $\mathbf{c}_X$ in \cite{BF}. 
The Euler obstruction $\nu_X:=\eu(\mathfrak{c}_X)$ of this cycle is called the ``Behrend function" of $X$. 
The weighted Euler characteristic $\chi(X, \nu_X)$ is well defined. 

If the scheme or DM stack $X$ admits a symmetric obstruction theory $E_X$ introduced in \cite{Behrend}, then the virtual fundamental class 
$[X]^{\vir}\in A_0(X)$ is zero dimensional.  In the case that $X$ is proper, the virtual count (Donaldson-Thomas type invariant) $\#^{\vir}(X)$ is defined as
$$\int_{[X]^\vir}1.$$
Behrend proves that it is the same as the 
weighted Euler characteristic $\chi(X, \nu_X)$. 
The Donaldson-Thomas moduli space of stable coherent sheaves on Calabi-Yau threefolds admits a symmetric obstruction theory, and the Donaldson-Thomas invariant constructed in \cite{Thomas} is the weighted Euler characteristic weighted by the Behrend function of the moduli space. 

If a moduli space $X$ is not proper so that the integration does not makes sense, the weighted Euler characteristic $\chi(X, \nu_X)$ is defined as the Donaldson-Thomas invariant.  Examples include an amount of moduli spaces of quiver representations with potentials. 

\subsection{Lagrangian intersection}

Let $X\hookrightarrow M$ be an embedding into a smooth DM stack $M$. 
Then the symmetric obstruction theory $E_X$ on $X$ induces a cone $C$ inside $\Omega_M|_{X}$, which Behrend calls the obstruction cone. 
The key observation of \cite{Behrend} (suggested  by R. Thomas) is that the obstruction cone $C$ is a Lagrangian cone 
inside $\Omega_M$.  The obstruction cone $C$, \'etale locally on a chart of $X$, is given by the normal cone of a closed immersion. 

There is an isomorphism between the group of integral cycles $\mathcal{Z}_*(X)$ of $X$ and the group $\mathfrak{L}_X(\Omega_M)$ of conic Lagrangian cycles inside $\Omega_M$ supported on $X$.  The Lagrangian cone $C$ is a linear combination of conic Lagrangian cycles of the form $N_{Z/M}^*$, where $Z$ is a closed substack of $X$ and $N_{Z/M}^*$ is the closure inside $\Omega_M$ of the conormal bundle of smooth locus of $Z$.  Note that $N_{Z/M}^*$ gives all the irreducible components of the cone $C$, and its image under the morphism $\pi: C\to X$ gives the integral cycle $\mathfrak{c}_X$.  Ginzburg's theory \cite{Ginzburg} tells us that the Lagrangian intersection $I(N_{Z/M}^*, [M])$ of $N_{Z/M}^*$ with the zero section is the weighted Euler characteristic $\chi(X, \eu(Z))$ of the Euler obstruction $\eu(Z)$ up to a sign when $Z$ is proper. 
On the other hand, the Lagrangian intersection also gives the degree zero Chern-Mather class of $Z$. 
So the Lagrangian intersection $I(C, [M])$ gives the weighted Euler characteristic $\chi(X, \nu_X)$, hence the Donaldson-Thomas invariant $\#^{\vir}(X)$.

The Lagrangian intersection $I(C, [M])$ can be explained using another (real) analytic section of $\Omega_M$ as studied in Theorem 9.5.3 and Theorem 9.7.11 of \cite{KS}, and the Appendix in \cite{KL}.   By fixing a Hermitian metric on $\Omega_M$, we may choose a small perturbation $\Gamma$ of the zero section of $\Omega_M$. Then $I(C, [M])=I(C, \Gamma)$,  see Theorem \ref{main_theorem_1}.

\subsection{Applications}

The idea of Lagrangian intersection above has applications in Donaldson-Thomas theory if the scheme (the moduli space) $X$ admits a 
$\CC^*$ action.  Let $F\subset X$ be the fixed point locus of the $\CC^*$ action. Then 
the weighted Euler characteristic 
$$\chi(X,\nu_X)=\chi(F, \nu_X|_{F}),$$ 
where $\nu_{X}|_{F}$ is the restriction of $\nu_X$ to $F$. 
The $\CC^*$ action on $X$ naturally gives rise to a cosection morphism 
$\sigma: \Omega_{X}\to \cO_X$ in the sense of \cite{KL} such that the degenerate locus is exactly the fixed point locus 
$F$.  The virtual dimension of $X$ with symmetric obstruction theory $E_X$ has dimension zero.  Hence the Kiem-Li localized 
virtual cycle $[F]^{\vir}_{\loc, \KL}\in A_0(F)$ is a dimension zero class.  We prove that the Kiem-Li localized invariant 
$$\int_{[F]^{\vir}_{\loc, \KL}}1=\chi(F,\nu_X|_{F}),$$
see Theorem \ref{main_theorem_2}.
We should remark that in the case that $X$ is proper, this is a direct application of Theorem 4.18 of Behrend \cite{Behrend} and Theorem 1.1 of Kiem-Li \cite{KL}.  So the key point of this paper is to prove Theorem  \ref{main_theorem_2} in the case that $X$ is nonproper, but the $\CC^*$ fixed locus $F$ is proper. 
Since one can do $\CC^*$ localization on the Kiem-Li localized virtual cycle,  it is hoped that Kiem-Li calculation is easier than 
the calculation of the Behrend function. 
If the $\CC^*$ action only has isolated fixed points, we recover Theorem  3.4 of Behrend and Fantechi \cite{BF2}. 

This note is organized as follows.  Section \ref{local_Euler_obstruction} introduces MacPherson's original definition of local Euler obstruction.   In Section \ref{Algebraic_formula_local_Euler_obstruction} we prove the algebraic formula of local Euler obstruction following the method in \cite{Sprinberg}. 
In Section \ref{Global_index_Lagrangian_intersection}  the global index theorem and the Lagrangian intersection are discussed, and in Section \ref{Application} we give the application of Euler obstruction and Lagrangian intersection to Behrend theory and Kiem-Li cosection  localization.

\subsection*{Acknowledgement}

The author would like to thank Professors Kai Behrend, Jun Li and Richard Thomas for valuable discussions.  The last section of this paper was motivated by a discussion with Kai Behrend when the author was visiting UBC in November 2012,  a discussion with Jun Li when the author was visiting Stanford in December 2012,  and the work with Richard Thomas about virtual signed Euler characteristics.  He thanks Jim Bryan, Kai Behrend (UBC), and Jun Li (Stanford) for hospitality. 
This work is partially supported by Simons Foundation Collaboration Grant 311837, and NSF Grant DMS-1600997.

\section{Local Euler obstruction of MacPherson}\label{local_Euler_obstruction}

\subsection{Nash blow-up}\label{Nash_blow_up}
Let $Z$ be a $d$-dimensional scheme or DM stack, which is embedded into a smooth scheme or DM stack $A$.  Let 
$\Gr_{d}(TA)$ be the Grassmannian bundle of $d$-dimensional subspaces of the bundle $TA$ over $A$.  There is a section
$$
\begin{array}{llll}
s: &Z^{\circ} & \rightarrow& \Gr_{d}(TA)\\ \\
&z &\mapsto  &T_z(Z^{\circ})
\end{array}
$$
of smooth locus $Z^{\circ}\subset Z$ into the Grassmannian bundle, where $T_z(Z^{\circ})$ is the tangent space of $Z^{\circ}$ at the smooth 
point $z$. Let $\widehat{Z}$ be the closure of the image $s$ inside the Grassmannian bundle. 
Denote by
$$\nu: \widehat{Z}\to Z$$
the map given by the restriction of the projection of $\Gr_{d}(TA)$.  There is a vector bundle $TZ$ over $\widehat{Z}$, which is the restriction of the tautological bundle over the Grassmannian $\Gr_{d}(TA)$.  The map $\nu: \widehat{Z}\to Z$ is called the 
\textbf{Nash blow-up} of $Z$. 

\begin{remark}
The construction of Nash blow-up of $Z$ is independent to the embedding. 
\end{remark}

\subsection{Local Euler obstruction}
We give MacPherson's original local Euler obstruction for a scheme or DM stack $Z$.  Let $Z\hookrightarrow A$ be an embedding. 

Let $P\in Z$. We choose a local coordinates $(z_1, \cdots, z_n)$ of $A$ at the point $P$, such that $z_i(P)=0$. 
Let $\|z\|^2=\sqrt{z_1\overline{z}_1+\cdots+z_1\overline{z}_n}$. Then $\|z\|^2$ is a real-valued function, and the differential $d\|z\|^2$
may be considered as a section of $T^*A$, where  $T^*A$ is the cotangent bundle of $A$ but viewed as a real vector bundle. The section $d\|z\|^2$ pulls back and restricts to a section $r$ of $T^*Z$, which is the dual of $TZ$. 

\begin{remark}
Here we use $T^*A$ to represent the real cotangent bundle of $A$. Later on in Section \ref{Application} we use $\Omega_A$ as the cotangent bundle of a scheme or DM stack $A$. 
\end{remark}

\begin{lemma}
For small enough $\epsilon>0$, the section $r$ is nonzero over $\nu^{-1}(z)$ where $0<\|z\|\leq \epsilon$. 
\end{lemma}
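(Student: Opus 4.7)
The plan is to argue by contradiction using the real analytic curve selection lemma. Suppose the conclusion fails: then there is a sequence $\widehat{z}_n \in \widehat{Z}$ with $z_n := \nu(\widehat{z}_n) \to P$, $z_n \neq P$, and $r(\widehat{z}_n) = 0$. Since the zero locus of $r$ intersected with $\{\nu(\widehat{z})\neq P\}$ is a semi-algebraic subset of the algebraic set $\widehat{Z}$ accumulating at a point of $\nu^{-1}(P)$, curve selection yields a real analytic arc $\gamma:[0,\delta)\to\widehat{Z}$ with $\gamma(0)\in\nu^{-1}(P)$, $\nu(\gamma(t))\neq P$, and $r(\gamma(t))=0$ for every $t\in(0,\delta)$.

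First I would unpack the vanishing condition. By the construction of the Nash blow-up, a point $\widehat{z}$ over $z\in Z$ encodes a $d$-dimensional subspace $T_{\widehat{z}}\subset T_z A$ arising as a limit of tangent spaces to $Z^{\circ}$; the fiber of the tautological bundle $TZ$ at $\widehat{z}$ is this $T_{\widehat{z}}$, and $r(\widehat{z})=0$ says exactly that the real linear form $d\rho|_z$ annihilates $T_{\widehat{z}}$, where $\rho := \|z\|^2$. The key reduction is to arrange that $\nu(\gamma(t))\in Z^{\circ}$ for small $t>0$, so that $T_{\gamma(t)}$ is literally the tangent space $T_{z(t)}Z^{\circ}$ with $z(t):=\nu(\gamma(t))$. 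This can be achieved because $s(Z^{\circ})$ is Zariski-dense in $\widehat{Z}$ and $\nu^{-1}(Z\setminus Z^{\circ})$ is a proper closed subset; after stratifying $\widehat{Z}$ and re-applying curve selection inside the offending stratum whenever necessary, finitely many reductions produce an arc traversing the open dense locus $s(Z^{\circ})$.

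Granting this, $z'(t)\in T_{z(t)}Z^{\circ}$ and the hypothesis $r(\gamma(t))=0$ immediately gives
\[
\frac{d}{dt}\rho\bigl(z(t)\bigr) \;=\; d\rho|_{z(t)}\bigl(z'(t)\bigr) \;=\; 0,
\]
so $\rho\circ z$ is constant on $(0,\delta)$. Continuity at $t=0$ and $\rho(P)=0$ force this constant to be zero, contradicting $z(t)\neq P$ for $t>0$. Running the contrapositive on a decreasing nested family of balls then yields a single $\epsilon>0$ that works.

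The main obstacle I expect is the third step above — guaranteeing that the arc can be chosen so that $\nu\circ\gamma$ meets $Z^{\circ}$ for small positive $t$. Morally this is a Whitney condition~(a) assertion for the distance function: limits of tangent planes at points approaching $P$ cannot become tangent to the ``spheres'' $\{\rho=\mathrm{const}\}$. I would treat it cleanly by putting a Whitney stratification on $Z$, pulling it back to a stratification of $\widehat{Z}$, and applying curve selection stratum by stratum so the analytic arc lies on the open Nash stratum $s(Z^{\circ})$.
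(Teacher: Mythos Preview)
Your overall architecture matches the paper's: argue by contradiction, invoke curve selection (Bruhat--Whitney) to produce a real-analytic arc $\gamma$ in $\widehat{Z}$ starting on $\nu^{-1}(P)$ along which $r$ vanishes, project to $z(t)=\nu(\gamma(t))$, and then deduce $\tfrac{d}{dt}\|z(t)\|^2=0$, contradicting $z(0)=P$, $z(t)\neq P$. The final step is exactly what the paper does (phrased there as ``the secant $PP'$ is orthogonal to the tangent to $C$ at $P'$'').

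The gap is your second paragraph. You try to arrange that $z(t)\in Z^\circ$ for small $t>0$, so that the tautological plane $T_{\gamma(t)}$ is literally $T_{z(t)}Z^\circ$ and hence contains $z'(t)$. But curve selection does not let you steer the arc into the open stratum: the hypothetical bad sequence $\widehat{z}_n$ may lie entirely over the singular locus of $Z$, in which case any arc produced by curve selection will also project into that locus. ``Re-applying curve selection inside the offending stratum'' does not move you toward $Z^\circ$; it only keeps you in whichever stratum the zero set actually meets. So the reduction to $Z^\circ$ is not available in general.

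The paper bypasses this reduction entirely, and this is the point you are circling around in your last paragraph without quite landing on it. Fix a Whitney stratification of $Z$. For small $t>0$ the projected arc lies in a single stratum $S$ (not necessarily $Z^\circ$). At $P'=z(t)$, the point $\gamma(t)\in\widehat{Z}$ records a limit $T_{P'}$ of tangent planes to $Z^\circ$ at smooth points approaching $P'$; Whitney condition~(a) for the pair $(Z^\circ,S)$ gives $T_{P'}S\subset T_{P'}$. Since $z'(t)\in T_{P'}S$, you get $z'(t)\in T_{P'}$, and now $r(\gamma(t))=0$ (i.e.\ $d\|z\|^2$ annihilates $T_{P'}$) yields $\tfrac{d}{dt}\|z(t)\|^2=0$ exactly as you wanted. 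In short: Whitney~(a) is not a device for pushing the arc into $Z^\circ$; it is what replaces that reduction, by guaranteeing $z'(t)\in T_{\gamma(t)}$ even when $z(t)$ is singular.
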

\begin{proof}
Let $\widehat{Z}_{P}=\nu^{-1}(P)$ be the fibre over $P$, and $K$ be the set of zeros of $r$.  Then 
$\widehat{Z}_{P}\subset K$.  Suppose by contradiction there exists $x\in \widehat{Z}_{P}\cap \overline{K-\widehat{Z}_{P}}$, where 
$\overline{K-\widehat{Z}_{P}}$ (closure of $K-\widehat{Z}_{P}$) is real sub-analytic. So by Bruhat-Whitney Lemma, there exists an analytic curve 
$\widetilde{C}: [0,t]\to \overline{K-\widehat{Z}_{P}}$ such that $\widetilde{C}(0)=x$ and $\widetilde{C}((0,t])\subset K-\widehat{Z}_{P}$.

Projecting on $Z$, an analytic curve $C$ passing through $P$ is obtained.  Let $\mathcal{S}$ be a Whitney stratification of $Z$. 
For $t\neq 0$ sufficiently small, $C((0,t])$ is fully contained in a stratum $S$.  Let $P^\prime\neq P$ be a point on the curve $C$. Then the right secant $PP^\prime$ is orthogonal to the limit tangent space $T_{P^\prime}$ at $P^\prime$ (i.e. a point of $\widehat{Z}_{P^\prime}\cap \widetilde{C}$ which projects to $P^\prime$).   However from Condition A of Whitney, $T_{P^\prime}$ contains the tangent space to the stratum at $P^\prime$, that is , it contains the right tangent space to $C$ at $P^\prime$.  So 
$PP^\prime$ is orthogonal to the tangent to $C$ at $P^\prime$ for all $P^\prime\in C$, and 
$P^\prime\neq P$, which is impossible. 
\end{proof}

Let $B_{\epsilon}$ be the $\epsilon$-ball $\{z|\|z\|\leq \epsilon\}$ and $S_{\epsilon}$ be the $\epsilon$-sphere 
$\{z|\|z\|= \epsilon\}$.  The obstruction to extending $r$ as a nonzero section of $T^*Z$ from $\nu^{-1}S_{\epsilon}$ to 
$\nu^{-1}B_{\epsilon}$, which MacPherson denoted it by $\eu(T^*Z, r)$, lies in 
$H^{2d}(\nu^{-1}B_{\epsilon}, \nu^{-1}S_{\epsilon}; \ZZ)$.  Let 
$\cO\in H_{2d}(\nu^{-1}B_{\epsilon}, \nu^{-1}S_{\epsilon}; \ZZ)$ be the orientation class. 

\begin{definition}
The local Euler obstruction of $Z$ at $P$ is defined as:
$$\eu(Z)(P)=\langle \eu(T^*Z, r), \cO\rangle.$$
\end{definition}

The local Euler obstruction has the following properties, see Section 3 in  \cite{MacPherson}:
\begin{enumerate}
\item $\eu(Z)(P)=1$ if $Z$ is nonsingular at $P$;
\item  If $Z$ is a curve, then $\eu(Z)(P)$ is the multiplicity of $Z$ at $P$. If $Z$ is the cone on a nonsingular plane curve of degree $d$ and $P$ is the vertex, $\eu(Z)(P)=2d-d^2$;
\item  $\eu(Z_1\times Z_2)(P_1\times P_2)=\eu(Z_1)(P_1)\cdot \eu(Z_2)(P_2)$;
\item If we have a scheme $Z$ or DM stack which is reducible at the point $P$, and let $Z_i$ are the irreducible components, then 
$\eu(Z)(P)=\sum\eu(Z_i)(P)$.
\end{enumerate}

The local Euler obstruction $\eu(Z)$ is a constructible function, which was proved in algebraic sense by Kennedy in Lemma 4 of \cite{Kennedy}. 
Let $F(Z)$ be the group of constructible functions with integer values if $Z$ is a scheme and rational values if $Z$ is a DM stack. 
Then there is a mapping 
\begin{equation}\label{Algebraic_cycle_constructible-function}
T:  \mathcal{Z}_*(Z)\to F(Z)
\end{equation}
given by:
$$\sum a_iZ_i\mapsto \sum a_i\eu(Z_i).$$
MacPherson proved that $T$ is an isomorphism of abelian groups in Lemma 2 of 
\cite{MacPherson}. The proof is true for DM stacks by taking $\QQ$ coefficients for the constructible function.

\section{Algebraic formula of local Euler obstruction}\label{Algebraic_formula_local_Euler_obstruction}

\subsection{Algebraic formula}

In this section we give the algebraic explanation of the  local Euler obstruction.  We mainly follow the proof of Gonzalez-Sprinberg \cite{Sprinberg}.

Let $Z$ be a prime cycle in a DM stack $X$.  Recall $\nu: \widehat{Z}\to Z$ is the Nash blow-up of $Z$. 
\begin{theorem}\label{Euler_Obstruction_Algebraic_formula}
We have:
$$\eu(Z)(P)=\int_{\nu^{-1}(P)}c(TZ)\cap s(\nu^{-1}(P), \widehat{Z}),$$
where $s(\nu^{-1}(P),\widehat{Z})$ is the Segre class of the normal cone of  $\nu^{-1}(P)$ in $\widehat{Z}$.
\end{theorem}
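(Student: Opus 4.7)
The plan is to identify MacPherson's topological obstruction $\langle \eu(T^*Z,r),\cO\rangle$ with a localized Euler class of the rank-$d$ complex bundle $T^*Z$ on $\widehat{Z}$, and then apply an excess intersection formula to rewrite it in terms of the Segre class of $\nu^{-1}(P)$ in $\widehat{Z}$.

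First, by the preceding lemma the section $r$ vanishes on $\nu^{-1}(B_\epsilon)\cap\widehat{Z}$ exactly along the fiber $\nu^{-1}(P)$. Hence the obstruction class $\eu(T^*Z,r)\in H^{2d}(\nu^{-1}B_\epsilon,\nu^{-1}S_\epsilon;\ZZ)$ is the localized Euler class of $T^*Z$ supported on $\nu^{-1}(P)$, and its pairing with the orientation class $\cO$ is the local intersection number of the graph $\Gamma_r\subset\Tot(T^*Z)$ with the zero section in a neighborhood of the fiber; equivalently, it is the local contribution of $\nu^{-1}(P)$ to the integral of the top Chern class $c_d(T^*Z)$ over $\widehat{Z}\cap\nu^{-1}B_\epsilon$.

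Second, since $\dim\nu^{-1}(P)<d=\rank(T^*Z)$, the zero scheme of $r$ is of excess dimension, so the contribution is \emph{not} simply $\int c_d$ transversally. The Fulton--MacPherson excess intersection formula identifies such a localized top Chern class contribution with the degree-zero component of $c(T^*Z|_{\nu^{-1}(P)})\cap s(\nu^{-1}(P),\widehat{Z})\in A_*(\nu^{-1}(P))$. Using the Chern class duality $c_i(T^*Z)=(-1)^i c_i(TZ)$ together with the canonical complex orientation of the Nash blow-up (as in Gonzalez-Sprinberg), the formula rewrites with $c(TZ)$ in place of $c(T^*Z)$, giving
\[
\eu(Z)(P) = \int_{\nu^{-1}(P)} c(TZ)\cap s(\nu^{-1}(P),\widehat{Z}).
\]

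The main obstacle I expect is the passage from the real-analytic section $r=d\|z\|^2$ to an algebraic framework in which Fulton's formula directly applies. Following Gonzalez-Sprinberg, this is bridged by exploiting the homotopy invariance of the topological obstruction: one deforms $r$ through real sections vanishing precisely on $\nu^{-1}(P)$ to a section arising from a Chern--Weil construction on the tautological Nash tangent bundle, for which the Segre class contribution can be computed directly from the normal cone of $\nu^{-1}(P)$ in $\widehat{Z}$. A secondary bookkeeping issue is the careful tracking of signs under Chern class duality so that the degree-zero piece assembles into $c(TZ)$ rather than $c(T^*Z)$; this is dictated by the orientation convention used when identifying $\mathcal{O}$ with the fundamental class of $\nu^{-1}B_\epsilon$ relative to its boundary.
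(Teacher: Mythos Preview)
Your broad strategy---identify MacPherson's obstruction with a localized top Chern class and then invoke Fulton's excess intersection formula to produce the Chern--Segre expression---is exactly the shape of Gonz\'alez-Sprinberg's argument and the paper's proof. But two of your steps are not yet proofs, and they are precisely where the work lies.

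First, the analytic-to-algebraic bridge. You write that one ``deforms $r$ through real sections vanishing precisely on $\nu^{-1}(P)$ to a section arising from a Chern--Weil construction.'' The paper does something rather different and more concrete. There is no algebraic section of $TZ$ (or $T^*Z$) on $\widehat{Z}$ with the right zero locus; instead one passes first, via a Hermitian metric, from $(T^*Z,r)$ to $(TZ,\sigma_s)$, where $\sigma_s$ is the orthogonal projection onto $TZ$ of the algebraic radial section $\rho$ of $E=\nu^{*}(T\CC^n|_Z)$. Then one builds an auxiliary space: the open set $U\subset\Gr_{n-d}(E)$ parametrizing splittings $E\cong TZ\oplus S$, an affine bundle $p:U\to\widehat{Z}$. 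On $U$ the projection $\sigma$ of $p^*\rho$ onto $p^*TZ$ is algebraic. The crucial estimate $\|\eta\|<\epsilon\|\sigma\|$ near $U_0=p^{-1}(\nu^{-1}(P))$, proved using Whitney's condition~A and a curve-selection argument, shows that $U_0$ is open and closed in the zero set of $\sigma$. Fulton's formula then applies to the algebraic section $\sigma$ on (an open subset of) $U$; a blow-up argument comparing the ideals $(\sigma_i)$ and $(\rho_i)$ (again using the estimate, plus normality) identifies the resulting Segre class with $s(U_0,U)=p^*s(\nu^{-1}(P),\widehat{Z})$. None of this is captured by ``homotopy invariance plus Chern--Weil''; the Whitney-stratification estimate and the ideal comparison on the blow-up are the substantive content.

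Second, your passage from $c(T^*Z)$ to $c(TZ)$ via $c_i(T^*Z)=(-1)^i c_i(TZ)$ is not how the sign is handled, and as stated it does not produce the claimed formula: the alternating signs do not cancel against the Segre class to yield $c(TZ)\cap s(\nu^{-1}(P),\widehat{Z})$ in degree zero. In the paper the switch happens at the level of bundles, not Chern classes: the Hermitian form gives a real isomorphism $TZ\cong T^*Z$ carrying $\sigma_s$ to $r$, and one checks the orientation convention so that $\eu(T^*Z,r)=\eu(TZ,\sigma_s)$. All subsequent computation is then done with $TZ$ and its algebraic section, so $c(TZ)$ appears directly with no sign juggling.
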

\begin{remark}
This formula is used by Behrend \cite{Behrend} in Section 1.2 of \cite{Behrend} as the definition of local Euler obstruction.
\end{remark}

\subsection{Proof of Theorem \ref{Euler_Obstruction_Algebraic_formula}}

The local Euler obstruction $\eu(Z)(P)$ is local, then we may suppose that $Z\subset \CC^n$ and take $P$ to be the origin. 
The Nash tangent bundle is $TZ$ and $\widehat{Z}_0=\nu^{-1}(0)$ is the fibre over $0\in Z\subset \CC^n$.  Consider the following diagram:
\begin{equation}
\xymatrix{
\widehat{Z}_0\ar[r]\ar[d]& \widehat{Z}\ar[r]\ar[d]^{\nu}& \Gr_d(T\CC^n)\ar[d]\\
0\ar[r]& Z\ar[r]& \CC^n
}
\end{equation}

\textbf{Step 1:}  Let $T\CC^n=\CC^n\times\CC^n$ be the trivial tangent bundle over $\CC^n$, and let 
$E:=\nu^{*}(T\CC^n|_{Z})$ be the pullback by $\nu$ to $\widehat{Z}$.  Then we have the following diagram:
$$
\xymatrix{
TZ\ar[r]\ar[rd]& E\ar[d]\\
&\widehat{Z}
}
$$
where the Nash tangent bundle $TZ$ can be taken as a subbundle of $E$.  There is a canonical section $\CC^n\to T\CC^n$, by 
$P\mapsto \overrightarrow{OP}$ (the vector of $P$), then through the map $\nu$, we have an induced section $\rho$ of $E$ 
over $\widehat{Z}$.  By choosing a Hermitian metric on $E$ (denoted it by a Hermitian form $s$),  then we can project $\rho$ to $TZ$ and get a section $\sigma_{s}$ of $TZ$ over $\widehat{Z}$. The Hermitian form $s$ induces an isomorphism $TZ\cong T^*Z$, which transforms the section $\sigma_s$ to $r$ we defined before.  Hence 
$\eu(T^*Z, r)\cong \eu(TZ, \sigma_s)$.

\begin{remark}
Note that the orientation of $T^*Z$ is the one from the identification between complex cotangent bundle and real cotangent bundle,  and the orientation of $TZ$ is the one coming from the complex structure. 
\end{remark}

\textbf{Step 2:} Let $\epsilon>0$ be sufficiently small, such that $\sigma_s$ does not vanish on $\nu^{-1}S_{\epsilon}$. Set 
$$V=\nu^{-1}B_{\epsilon}, \quad  \partial V=\nu^{-1} S_{\epsilon}.$$
In order to calculate $\eu(TZ, \sigma_s)$, we consider the class $\omega$ of universal obstruction of $TZ$, determined by the canonical section of $p_{TZ}^*TZ$,  which is 
$$v\mapsto (v,v),$$
where $v\in TZ$, and $p_{TZ}: TZ\to \widehat{Z}$ is the projection.  The class
$$\omega\in H^{2d}(TZ, TZ- \widehat{Z})=H^{2d}_{\widehat{Z}}(TZ)$$
after identifying $\widehat{Z}$ as the zero section of $TZ$, and then it restricts to a class in 
$H^{2d}_{V}(TZ|_{V})$, still denoted by $\omega$. 

The section $\sigma_s$ of $TZ$ induces:
$$(V, V-\widehat{Z}_0)\to (TZ|_{V}, TZ|_{V}-V),$$
and induces:
$$\sigma_s^*: H^{2d}_{V}(TZ|_{V})\to H^{2d}_{\widehat{Z}_0}(V).$$
Let $\epsilon>0$ be small enough so that $V$ is a neighbourhood of $\widehat{Z}_0$ and $V-\widehat{Z}_0$ retracts to $\partial V$. 
So
$$H^{*}_{\widehat{Z}_0}(V)= H^{*}(V, V-\widehat{Z}_0)\cong H^{*}(V, \partial V).$$
Then 
$$\sigma_s^*(\omega)\in H^{2d}_{\widehat{Z}_0}(V)\cong H^{2d}(V, \partial V)$$
is the class $\eu(TZ, \sigma_s)$ by universal property of $\omega$.  So we have $H^{2d}_{\widehat{Z}_0}(V)\cong H^{2d}_{\widehat{Z}_0}(\widehat{Z})$, and $\sigma_s^*(\omega)$ can be considered in $H^{2d}_{\widehat{Z}_0}(\widehat{Z})$.
So
\begin{equation}
\eu(Z)(0)=\deg(\sigma_s^*(\omega)\cap [V,\partial V])=\deg(\sigma_s^*(\omega)\cap[\widehat{Z}]).
\end{equation}

\textbf{Step 3:}  The next step is to construct a scheme or variety associated with $\widehat{Z}$ such that we have an algebraic section of $TZ$ on it, since $\sigma_s$ is analytic.  Our method is to split the bundle $E$ as the sum of $TZ$ and its complement without the aid of Hermitian form, and do it algebraically.  More precisely, let us consider the exact sequence of vector bundles on $\widehat{Z}$:
$$0\to TZ\longrightarrow E\stackrel{j}{\longrightarrow} Q\to 0,$$
where $Q=E/TZ$, and $\rk(TZ)=d$, $\rk(E)=n$.

Let $\Gr_{n-d}(E)$ be the Grassmannian bundle over $\widehat{Z}$, whose fibre over $x\in \widehat{Z}$ is the space of $(n-d)$-dimensional subspaces in $E_x$.  Consider the open subset $U\subset \Gr_{n-d}(E)$, whose fibre over $x\in\widehat{Z}$ is the complement of $TZ|_{x}$ of $E_x$, i.e. a subspace $a: Q\subset E$ such that $j\circ a=id_{Q}$. 
Let $p: U\to \widehat{Z}$ be the projection, whose fibre over $x\in \widehat{Z}$ is an affine space of dimension $d(n-d)$. This $U$ is a principal homogeneous space.  On $U$ consider the pullback bundle $p^*E$, its section $\rho$ and $\sigma_s$ of $TU=p^*TZ$ (we use the same notations). 

Let $S$ be the complement of $TU$, obtained by restriction on $U$ of the tautological fibre of the Grassmannian. Let $\sigma$ (reap. $\eta$) be the projections of $\rho$ on $TU$ (resp. $S$). Then 
$$E=TU\oplus S,$$ and
$$\rho=\sigma+\eta.$$
Let $U_0=p^{-1}(\widehat{Z}_0)$. Then we have the following diagram:
$$
\xymatrix{
U_0\ar[r]\ar[d]& U\ar[d]\\
\widehat{Z}_0\ar[r] &\widehat{Z}
}
$$

\begin{proposition}
Let $u\in U_0$, and $\epsilon>0$ sufficiently small. Then there exists a neighbourhood $W$ of $u$ in $U$ such that for all $w\in W$, 
$$\|\eta(w)\|<\epsilon \|\sigma(w)\|.$$
\end{proposition}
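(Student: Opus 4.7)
The plan is to argue by contradiction using Bruhat-Whitney curve selection, as in the preceding lemma, and to exploit that the canonical section $\rho$ points asymptotically along the Nash tangent directions.

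First I would observe that under the trivialization $E = \nu^*(\CC^n \times \CC^n)$, the section $\rho$ is simply $w \mapsto \nu(p(w))$. Hence $\rho$, and with it both $\sigma$ and $\eta$, vanish identically on $U_0 = p^{-1}(\widehat{Z}_0)$; the proposition is really a comparison of their rates of vanishing as $w \to u$.

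Suppose for contradiction that $u$ lies in the closure of the semi-analytic set
$$K = \{w \in U : \rho(w) \neq 0,\ \|\eta(w)\| \geq \epsilon \|\sigma(w)\|\}.$$
By Bruhat-Whitney, extract a real analytic curve $\tilde{C} : [0, t] \to U$ with $\tilde{C}(0) = u$ and $\tilde{C}((0, t]) \subset K$; after choosing a Whitney stratification of $Z$ and refining the parameter interval, arrange that $c(s) := \nu(p(\tilde{C}(s)))$ lies in a single stratum for $s > 0$. Expand analytically $c(s) = s^k c_0 + O(s^{k+1})$ with $c_0 \neq 0$. Since $c'(s)$ is tangent to the stratum, Whitney condition (A) applied to the pair (smooth locus, chosen stratum) yields $c'(s) \in TZ_{C(s)}$, where $C(s) = p(\tilde{C}(s))$. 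Continuity of the Nash tangent bundle on $\widehat{Z}$ forces $TZ_{C(s)} \to TZ_x$ with $x = p(u)$, and so the limit direction $c_0 = \lim_{s\to 0} c'(s)/(k s^{k-1})$ lies in $TZ_x$.

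Decomposing $c(s)$ via the continuous families of projections onto $TU_{\tilde{C}(s)} = TZ_{C(s)}$ and $S_{\tilde{C}(s)}$, one finds $\sigma(\tilde{C}(s))/s^k \to c_0$ and $\eta(\tilde{C}(s))/s^k \to 0$, so $\|\eta\|/\|\sigma\| \to 0$ along the curve, contradicting the defining inequality of $K$. The main obstacle is the curve-extraction step: arranging that the Bruhat-Whitney curve projects into a single Whitney stratum of $Z$, so that $c'(s)$ may be legitimately compared to the Nash tangent $TZ_{C(s)}$ through Whitney condition (A). Once this is set up, the Taylor expansion converts the qualitative Whitney condition into the desired quantitative inequality.
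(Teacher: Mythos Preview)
Your argument is correct and follows the same route as the paper: Bruhat--Whitney curve selection on the bad set, Whitney condition~A to place the stratum tangent inside the Nash tangent, and the secant-becomes-tangent fact for a real-analytic curve through the origin (which you phrase as a Taylor expansion $c(s)=s^kc_0+O(s^{k+1})$). The only difference is organizational: the paper first reduces from $U$ to a Hermitian section $\widehat{Z}\hookrightarrow U$ (using that $U$ is a principal homogeneous space) and then to an angle estimate $\mbox{tg}(oz,T_{M_\alpha,z})\to 0$ on $Z$ before invoking curve selection there, whereas you run curve selection directly in $U$ and absorb the varying complement $S_{\tilde C(s)}$ through continuity of the splitting projections.
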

\begin{proof}
A point $u\in U_0$ represents a point in $\widehat{Z}_0$, i.e. a limit  tangent space $T$ at the origin, and an additional space $S$ in 
$E$.  Fix a Hermitian form $s$, then it gives the decomposition of $E$,  i.e. $S=T^{\perp}$.  Then $s$ induces a section of $U$ passing through $u$.  It suffices to prove the proposition on this section, because $U$ is a principal homogeneous space, and we can identify $\widehat{Z}$ with this section. So we shall be induced to prove on $\widehat{Z}$ the following statement:  Let $x\in \widehat{Z}_0$, $\epsilon>0$, then there exists a neighbourhood $\widehat{V}$ of $x$ in $\widehat{Z}$, such that for all $v\in \widehat{V}$, we have
\begin{equation}\label{inequality}
\|\eta_{s}(v)\|<\epsilon \|\sigma_s(v)\|
\end{equation}
where $\sigma_s$ and $\eta_s$ are the components of $\rho$ determined by $s$. 

Let $\{M_{\alpha}\}$ be a Whitney stratification of $Z$.  So it suffices to show that there exists a neighbourhood $\overline{V}$ of $0$ in $Z$ such that for all $\alpha$ and for all $z\in M_{\alpha}\cap\overline{V}$, one has $\mbox{tg}^2(oz, T_{M_{\alpha},z})<\epsilon^2$, where $T_{M_{\alpha},z}$ is the tangent space of $M_{\alpha}$ at the point $z$.  
(This is because $|\mbox{tg}(oz, T_{Z})|=\frac{\|\eta_s(v)\|}{\|\sigma_s(v)\|}$.)
Here $\tg(oz, T_{M_{\alpha},z})$ and  $\tg(oz, T_{Z})$ are the trigonometric tangent functions of the angles between $oz$ and the tangent planes. 

We prove this by contradiction.  Suppose that there is an $\alpha$ such that for every neighbourhood $\overline{V}$ of $0$ there exists 
$z\in M_{\alpha}\cap \overline{V}$ and 
$$\mbox{tg}^2(oz, T_{M_{\alpha, z}})\geq \epsilon^2.$$
Consider all real semi-analytic sets
$$R=\{z\in M_{\alpha}| \mbox{tg}^2(oz, T_{M_{\alpha, z}})\geq \epsilon^2\}.$$
Then $0\in \overline{R}$, the closure of $R$. By the Bruhat-Whitney Lemma, there is a real analytic curve
$$C: [0,t]\to \overline{R},$$
such that
$C(0)=0$, and $C((0,t])\subset R$. So this curve would have the following property: at any point $P\neq 0$, $P\in C$, 
$\mbox{tg}^2(OP, \mbox{tg}_P)\geq \epsilon^2$. This is impossible because the $\mbox{tg}$ tends to zero when $P\to 0$.

As a result we have (\ref{inequality}). In fact, giving  $v\in \widehat{Z}$ is equivalent to giving $z=\nu(v)\in Z$ and a limit tangent space 
$T_z$ of the tangent spaces to smooth points in a neighbourhood of $z$.  So by Whitney condition A, we have:
$T_{M_{\alpha},z}\subset T_z$ and 
$$|\mbox{tg}(oz, T_{M_{\alpha},z})|\geq |\mbox{tg}(oz, T_z)|=\frac{\|\eta_s(v)\|}{\|\sigma_s(v)\|}.$$
\end{proof}

\begin{corollary}\label{Cor_inequality}
The set $U_0$ is open and closed in the set $K$ of zeroes of $\sigma$. 
\end{corollary}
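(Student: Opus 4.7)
The plan is to verify the two parts of the statement separately: that $U_0$ is closed in $K$, and that $U_0$ is open in $K$. Closedness is essentially formal; openness is where the proposition I just proved does real work.

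For closedness, I would simply observe that $\{0\} \subset Z$ is closed, so $\widehat{Z}_0 = \nu^{-1}(0)$ is closed in $\widehat{Z}$, and therefore $U_0 = p^{-1}(\widehat{Z}_0)$ is closed in $U$. Intersecting with $K \subset U$ gives closedness in $K$.

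For openness, the first thing to record is that $U_0 \subset K$. This is because the section $\rho$ on $\widehat{Z}$ was defined as the pullback under $\nu$ of the canonical section $P \mapsto \overrightarrow{OP}$ of $T\CC^n$, which vanishes at the origin. Hence $\rho$ vanishes on $\widehat{Z}_0$, and on $U_0$ both components $\sigma$ and $\eta$ are zero; in particular $\sigma|_{U_0} = 0$, so $U_0 \subset K$. Now fix $u \in U_0$ and apply the previous proposition with, say, $\epsilon = 1/2$: there exists a neighborhood $W$ of $u$ in $U$ such that $\|\eta(w)\| < \tfrac{1}{2}\|\sigma(w)\|$ for every $w \in W$. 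If $w \in W \cap K$, then $\sigma(w) = 0$ forces $\eta(w) = 0$, and hence $\rho(w) = \sigma(w) + \eta(w) = 0$. But $\rho(w)$ equals (the lift of) the vector $\overrightarrow{O\,\nu(p(w))}$, so $\rho(w) = 0$ exactly when $\nu(p(w)) = 0$, i.e. when $w \in U_0$. Thus $W \cap K \subset U_0$, proving that $U_0$ is open in $K$.

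The only subtlety I see is keeping the identifications straight: $\rho$ was originally defined on $\widehat{Z}$ (as a section of $E = \nu^{*}(T\CC^{n}|_{Z})$) and is then pulled back to a section of $p^{*}E$ on $U$, and one must remember that the vanishing locus of this pullback is exactly $p^{-1}(\nu^{-1}(0)) = U_0$. Once that is clear, no further analytic input is needed beyond the inequality already established in the proposition, so the corollary follows immediately.
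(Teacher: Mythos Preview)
Your proof is correct and follows essentially the same approach as the paper: the paper also identifies $U_0$ as the zero locus of $\rho$ (giving closedness and the inclusion $U_0\subset K$), and then uses the inequality $\|\eta\|<\epsilon\|\sigma\|$ from the preceding proposition to deduce openness. The only cosmetic difference is that the paper phrases the openness argument as a proof by contradiction via a convergent sequence, whereas you give the equivalent direct neighborhood argument.
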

\begin{proof}
Indeed, $U_0$ is the set of zeros of $\rho$, and $\rho=0$ implies $\sigma=0$.  We show $U_0$ is open by contradiction. 
Suppose there exists a sequence $\{u_i\}$ contained in $K-U_0$ which converges to $u\in U_0$. By the previous proposition, 
for $i$ sufficiently large, $\sigma(u_i)=0$ implies $\eta(u_i)=0$. Since $\rho=\sigma+\eta$, $\rho(u_i)=0$, that is to say, $u_i\in U_0$ for $i$ large enough and we have a contradiction. 
\end{proof}

\textbf{Step 4:}  We continue the proof of the theorem.  Let 
$K^\prime=K-U_0$ be 
the union of connected components of $K$ (the zeros of $\sigma$) disjoint to $U_0$. Let 
$$W:=U-K^\prime;$$
which is an open neighbourhood of $U_0$ in $U$, according to Corollary \ref{Cor_inequality}.  Note also the restriction to $W$ of $p: U\to \widehat{Z}$ and consider the following diagram:
$$
\xymatrix{
U_0\ar[r]\ar[d]&W_{V}\ar[r]\ar[d]&W\ar[d]_{p}& p^*TZ\ar[l]\ar[d]^{\pi}\\
\widehat{Z}_0\ar[r]& V\ar[r]&\widehat{Z}& TZ\ar[l]
}
$$
where $V=\nu^{-1}B_{\epsilon}$ and $W_{V}=p^{-1}(V)$. 
Then we have: 
$$(W_{V}, W_{V}-U_0)\stackrel{p}{\rightarrow}(V, V-\widehat{Z}_0)\stackrel{\sigma}{\rightarrow}(TZ, TZ-\widehat{Z})$$
and 
$$p^*\sigma_s^*(\omega)\in H^{2d}_{U_0}(W_{V})\cong H^{2d}_{U_0}(W).$$
So:
$$p^*(\sigma_s^*(\omega)\cap [\widehat{Z}])=p^*\sigma_s^*\omega \cap [W]\in H_{2d(n-d)}(U_0).$$
In addition, we have morphisms of pairs of spaces:
$$(W, W-U_0)\stackrel{\sigma}{\rightarrow}(p^*TZ|_{W}, p^*TZ|_{W}-W)\stackrel{\pi}{\rightarrow}(TZ, TZ-\widehat{Z})$$
where $\sigma^*\pi^*\omega\in H^{2d}_{U_0}(W)$. 

We prove that $p^*\sigma_s^*\omega=\sigma^*\pi^*\omega$. Note that 
$s: \widehat{Z}\to U$ is the section induced from the Hermitian form and $s|_{V}$ is the restriction to $V$.  It  induces the isomorphism
$(s|_{V})^*: H^{2d}_{U_0}(W)\stackrel{\cong}{\rightarrow}H^{2d}_{\widehat{Z}_0}(V)$, because we have the following commutative diagram:
$$
\xymatrix{
H^{2d}_{U_0}(W)\ar[r]^{\cong}\ar[d]_{(s|_{V})^*}& H^{2d}_{\widehat{Z}_0}(U)\ar[d]_{s^*}\\
H^{2d}_{\widehat{Z}_0}(V)\ar[r]^{\cong}& H^{2d}_{\widehat{Z}_0}(\widehat{Z})
}
$$
where the horizontal arrows are excisions. 

In fact, $(s|_{V})^*$ is the inverse of $p^*$, so 
$p^*\sigma_s^*\omega=\sigma^*\pi^*\omega$ is equivalent to $\sigma_s^*\omega=(s|_{V})^*\sigma^*\pi^*\omega$.  
The latter is true by the following commutative diagram:
$$
\xymatrix{
W\ar[r]^{\sigma}& p^*TZ\ar[d]^{\pi}\\
V\ar[r]^{\sigma_s}\ar[u]^{s|_{V}}& TZ
}
$$
As a consequence we have:
\begin{equation}
p^*\sigma_s^*\omega\cap [W]=\sigma^*\pi^*\omega\cap [W]
\end{equation}
and by projection:
\begin{equation}
\sigma_*(\sigma^*\pi^*\omega\cap [W])=\pi^*\omega\cap [\sigma(W)].
\end{equation}
The latter class is the product of the fundamental class of $W$ with fundamental homology class of $\sigma(W)$, so this is the class of intersection cycle $[W\cdot_{\sigma}W]$ in $A_*(W\cap \sigma(W))$. So 
\begin{equation}
\pi^*\omega\cap [\sigma(W)]=[W\cdot_{\sigma}W].
\end{equation}

\textbf{Step 5:}  There is an algebraic formula to calculate the above intersection cycle due to Fulton \cite{Fulton}. 
The variety $W$ is embedded as zero section into $p^*TZ$, which is locally complete intersection. 
The scheme or DM stack intersection $W\cap \sigma(W)$ is defined by $\sigma=0$ in $W$, which we will denote it by 
$W_0$.  The dimension of $W$ is $d+d(n-d)$ and its codimension in $p^*TZ$ is $d$. Look at the following diagram:
$$
\xymatrix{
W\ar[r]^{\sigma} & p^*TZ\\
W_0\ar[r]\ar[u]& W\ar[u].
}
$$

Then 
\begin{equation}
[W\cdot_{\sigma}W]=\left(c(p^*TZ|_{W_0})\cap s(W_0, W)\right)_{d(n-d)}
\end{equation}
i.e. the component of dimension $d(n-d)$ in $A_*(W_0)$ is the cap product of the Chern class of $p^*TZ|_{W_0}$ with the Segre class 
of $W_0$ in $W$, see Chapter 6 in  \cite{Fulton}. 

\begin{corollary}
With the above notation,  we have
$$s(W_0, W)=s(U_0, U).$$
\end{corollary}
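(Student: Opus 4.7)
The plan is to combine two observations: (i) Segre classes are preserved under restriction to an open subset containing the given subscheme, and (ii) $W_0$ and $U_0$ coincide as closed subschemes of $W$. For observation (i), Corollary \ref{Cor_inequality} tells us that $K^\prime = K - U_0$ is a union of connected components of $K$ disjoint from $U_0$, so the open subset $W = U - K^\prime \subset U$ contains $U_0$. The standard restriction property of the Segre class then yields $s(U_0, U) = s(U_0, W)$.

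Next, I would identify $W_0$ with $U_0$ as closed subschemes of $W$. Set-theoretically this is immediate: $W_0 = W \cap K = K - K^\prime = U_0$. For the scheme structures, $W_0$ is cut out by the vanishing of $\sigma$ on $W$, while $U_0 = p^{-1}(\widehat{Z}_0)$ is cut out by the vanishing of $\rho = \sigma + \eta$ (since $\rho$ pulls back the tautological section of $T\CC^n|_Z$, whose zero scheme is exactly $\widehat{Z}_0$). It therefore suffices to show that on $W$ the ideal sheaves $(\sigma)$ and $(\sigma, \eta)$ coincide, i.e., that $\eta \in (\sigma)$. The key point is that $p: U \to \widehat{Z}$ is a principal homogeneous space with respect to the algebraic splittings $E = TU \oplus S$ parametrized by the open subset of $\Gr_{n-d}(E)$; with respect to this structure the component $\eta$ is determined $\cO_W$-linearly by $\sigma$ together with the tautological Grassmannian data, so $(\sigma) = (\sigma, \eta)$ on $W$. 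Combining the two steps then gives
\begin{equation*}
s(W_0, W) = s(U_0, W) = s(U_0, U).
\end{equation*}

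The main obstacle is precisely the scheme-theoretic identification in the second step. The preceding proposition supplies only the analytic inequality $\|\eta\| < \epsilon \|\sigma\|$, which guarantees set-theoretic agreement of the zero loci but not automatically equality of their defining ideals. Resolving this uses the algebraic principal homogeneous space structure of $U$ over $\widehat{Z}$ together with the compatibility of the decomposition $E = TU \oplus S$ with the tautological subbundle on the Grassmannian bundle; together these force the defining ideals of $W_0$ and $U_0$ to agree on $W$, after which the desired equality of Segre classes is automatic.
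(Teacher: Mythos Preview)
Your step (i) is fine and coincides with the paper's last line: since $W=U\setminus K'$ is open in $U$ and contains $U_0$, one has $s(U_0,W)=s(U_0,U)$.

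The gap is step (ii). The assertion that ``$\eta$ is determined $\cO_W$-linearly by $\sigma$ together with the tautological Grassmannian data'' does not hold. Both $\sigma$ and $\eta$ are $\cO_U$-linear projections of the \emph{same} section $\rho$ (onto $TU$ and $S$ respectively), and neither determines the other. Concretely, with the tautological splitting $a\colon p^*Q\to p^*E$ on $U$ one has $\eta=a(j(\rho))$ and $\sigma=\rho-a(j(\rho))$; hence the entries of $\sigma$ lie in the ideal $(\rho)$, giving $\mathcal I=(\sigma)\subset(\rho)=\mathcal J$, but there is no formal reason for $\eta$ (equivalently $\rho$) to lie in $(\sigma)$. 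The principal homogeneous space structure of $U$ over $\widehat Z$ tells you how the \emph{splitting} varies along the fibres, not that the two projections of $\rho$ generate the same ideal.

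What the preceding proposition actually supplies is the analytic bound $\|\eta\|<\epsilon\|\sigma\|$ near $U_0$, hence $|\rho_i|\le(1+\epsilon)\sum_j|\sigma_j|$. A growth estimate of this kind does not by itself force $\mathcal J\subset\mathcal I$, and the paper does not claim $W_0=U_0$ as schemes. Instead it passes to the blow-up $f\colon\overline W\to W$ along $W_0$: there $f^{-1}\mathcal I\cdot\cO_{\overline W}$ is invertible with local generator $g$, the functions $\overline\rho_i/g$ are locally bounded by the estimate above and therefore holomorphic because $\overline W$ is normal, so $f^{-1}\mathcal J\cdot\cO_{\overline W}=f^{-1}\mathcal I\cdot\cO_{\overline W}$. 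Birational invariance of the Segre class then yields $s(W_0,W)=s(U_0,W)$, after which your step (i) finishes. This blow-up and normality argument is precisely the idea missing from your proposal.
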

\begin{proof}
Let $\mathcal{I}=(\sigma_1, \cdots, \sigma_n)$ (resp. $\mathcal{J}=(\rho_1, \cdots, \rho_n)$)  be the ideal of definition of 
$W_0$ (reap. $U_0$), where $\sigma_i$ (resp. $\rho_i$) is the $i$-th component of $\sigma$ (resp. $\rho$). 

Let $f: \overline{W}\to W$ be the blow up of $W$ along $W_0$.  Let $\overline{\mathcal{I}}=f^*\mathcal{I}=(\overline{\sigma}_1, \cdots, \overline{\sigma}_n)$  (resp. $\overline{\mathcal{J}}=f^*\mathcal{J}=(\overline{\rho}_1, \cdots, \overline{\rho}_n)$), where 
$\overline{\sigma}_i=f^*\sigma_i$, (resp. $\overline{\rho}_i=f^*\rho_i$). 

We show that $\overline{\mathcal{I}}=\overline{\mathcal{J}}$.  On one hand, we have: 
$\mathcal{I}\subset \mathcal{J}$, because $\rho=0$ implies $\sigma=0$. So we have:
$\overline{\mathcal{I}}\subset \overline{\mathcal{J}}$. On the other hand, since $\rho=\sigma+\eta$ and $\|\eta\|<\epsilon \|\sigma\|$ in a neighbourhood of $U_0$, then $|\rho_i|<(1+\epsilon)\sum_{j=1}^{n}|\sigma_j|$ for $i=1, \cdots, n$.  Let $g$ be a local generator of $\overline{\mathcal{I}}$ (which is invertible). Then $\frac{\rho_i}{g}$ is a function locally bounded for all $i$, so holomorphic because $\overline{W}$ is normal. As a result, we have $\overline{\mathcal{J}}\subset \overline{\mathcal{I}}$. So $\overline{\mathcal{J}}=\overline{\mathcal{I}}$.

By invariance of Segre class under birational map, $s(U_0, W)=f_*s(\overline{U}_0, \overline{W})=s(W_0, W)$, where 
$\overline{U}_0=\overline{W}_0$. Finally, since $U_0$ is closed in $U$, we have $s(U_0, W)=s(U_0, U)$. 
\end{proof}

So 
\begin{equation}
[W\cdot_{\sigma}W]=\left(c(p^*TZ|_{U_0})\cap s(U_0, U)\right)_{d(n-d)}
\end{equation}
The projection $p$ is flat, $p^*s(\widehat{Z}_0, Z)=s(U_0, U)$. So 
\begin{equation}
[W\cdot_{\sigma}W]=\left(c(p^*TZ|_{\widehat{Z}_0})\cap s(\widehat{Z}_0, \widehat{Z})\right)_{0}
\end{equation}
So: 
$$\eu(Z)(P)=\deg(c(TZ|_{\widehat{Z}_0})\cap s(\widehat{Z}_0, \widehat{Z})).$$
This proves Theorem \ref{Euler_Obstruction_Algebraic_formula}.

\subsection{Another formula}

We make the following diagram:
\[
\xymatrix{
D\ar[r]\ar[d]&\widetilde{Z}\ar[d]^{b}\\
\widehat{Z}_0\ar[r]\ar[d]& \widehat{Z}\ar[d]^{\nu}\\
0\ar[r]&Z
}
\]
where $b: \widetilde{Z}\to \widehat{Z}$ is the blow-up of $\widehat{Z}$ along $\widehat{Z}_0$, and $D=b^{-1}(\widehat{Z}_0)$ is the exceptional divisor.  Let $\xi$ be the normal bundle of $D\subset \widetilde{Z}$. 

\begin{corollary}\label{formula_blow-up_Euler_obstruction}
$$\eu(Z)(P)=\int_{D}c_{d-1}(TZ-\xi)\cap [D],$$
where $c(TZ-\xi)=\frac{c(TZ)}{c(\xi)}$. 
\end{corollary}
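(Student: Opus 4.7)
The plan is to combine Theorem~\ref{Euler_Obstruction_Algebraic_formula} with the standard expression of the Segre class of the center of a blow-up in terms of its exceptional divisor. By that theorem,
\[
\eu(Z)(P) \;=\; \int_{\widehat{Z}_0} c(TZ)\cap s(\widehat{Z}_0,\widehat{Z}),
\]
so the whole task reduces to re-expressing $s(\widehat{Z}_0,\widehat{Z})$ as a pushforward from the exceptional divisor $D\subset\widetilde{Z}$.

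For this I would invoke the birational invariance of Segre classes (Proposition~4.2 of \cite{Fulton}) applied to the blow-up $b:\widetilde{Z}\to\widehat{Z}$ with center $\widehat{Z}_0$. Since $D = b^{-1}(\widehat{Z}_0)$ is a Cartier divisor in $\widetilde{Z}$ with normal bundle $\xi$, one has $s(D,\widetilde{Z}) = c(\xi)^{-1}\cap[D]$, and then
\[
s(\widehat{Z}_0,\widehat{Z}) \;=\; b_*\, s(D,\widetilde{Z}) \;=\; b_*\bigl(c(\xi)^{-1}\cap[D]\bigr) \;=\; \sum_{i\geq 0} (-1)^i b_*\bigl(c_1(\xi)^i\cap[D]\bigr).
\]

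Substituting into the first display and using the projection formula to move $c(TZ)$ past $b_*$ gives
\[
\eu(Z)(P) \;=\; \int_D c(b^*TZ)\cdot c(\xi)^{-1}\cap[D] \;=\; \int_D c(TZ-\xi)\cap[D].
\]
Because $D$ is a Cartier divisor inside the $d$-dimensional ambient space $\widetilde{Z}$, it has dimension $d-1$, so only the codimension $(d-1)$ component of $c(TZ-\xi)$ survives in the degree, yielding the stated formula. The only step that is not purely formal is the invocation of Fulton's blow-up/birational-invariance formula for the Segre class; once that is accepted the remainder is a one-line projection-formula manipulation, so I expect this citation step to be the main (but essentially bookkeeping) obstacle.
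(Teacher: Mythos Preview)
Your proposal is correct and follows essentially the same argument as the paper: both start from Theorem~\ref{Euler_Obstruction_Algebraic_formula}, express $s(\widehat{Z}_0,\widehat{Z})$ as $b_*(c(\xi)^{-1}\cap[D])$, and then use the projection formula together with the fact that $\dim D = d-1$ to extract $c_{d-1}(TZ-\xi)$. The only cosmetic difference is that the paper phrases the Segre class identity as ``the definition of the Segre class'' rather than citing Fulton's birational invariance, but the content is identical.
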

\begin{proof}
This is from the definition of Segre class:
$$s(\widehat{Z}_0, \widehat{Z})=b_*(c(\xi)^{-1}\cap [D]).$$
So 
\begin{align*}
\deg(c(TZ)\cap s(\widehat{Z}_0, \widehat{Z}))&=\deg(c(TZ)\cap b_*(c(\xi)^{-1}\cap [D]))\\
&=\deg(c(TZ)\cup c(\xi)^{-1}\cap [D])\\
&=\deg(c_{d-1}(TZ-\xi)\cap [D]).
\end{align*}
\end{proof}

\begin{Example}
Let $Z=\spec\CC[x,y, u, v]/(yu-xv)$. Then $Z$ is smooth except the origin $P$, which is called conifold singularity. 
The natural embedding $Z\subset \CC^4$ gives the Nash blow-up
$$\nu: \widehat{Z}\to Z$$
which is $\cO_{\bP^1\times \bP^1}(-1,-1)$.  The Nash cotangent bundle is
$T^*Z=\bP T^*\CC^4/\cO(1)$. 

Note that the line bundle $\cO(1)$, when restrict to the exceptional divisor $\bP^1\times\bP^1$, is 
$\cO(1,1)$. 
Let $c_{1}(\cO(1,1))=x+y$. 
Then from Corollary \ref{formula_blow-up_Euler_obstruction}
\begin{align*}
\eu(Z)(P)&=\deg(c(TZ)\cap s(\bP^1\times\bP^1, \widehat{Z}))\\
&=\deg(\frac{1}{1+x+y}\cdot c(\cO(-1,-1))^{-1}\cap [\bP^1\times\bP^1])\\
&=2.
\end{align*}
\end{Example}

\section{Global index theorem and Lagrangian Intersection}\label{Global_index_Lagrangian_intersection}

\subsection{Chern-Mather class}

Fix a DM stack $X$ throughout this section. 
Let $Z\subset X$ be a prime cycle.  The Nash blow-up $\nu: \widehat{Z}\to Z$ and the Nash tangent bundle $TZ$ are defined in
Section \ref{Nash_blow_up}. 

\begin{definition}
The Chern-Mather class $c^{M}(Z)$ is:
$$c^{M}([Z])=\nu_{*}(c(TZ)\cap [\widehat{Z}]),$$
where $c^{M}([Z])\in A_*(X)$. 
\end{definition}

Let  $c_0^{M}(Z)$ be the degree zero part $c_0^{M}: \mathcal{Z}_*(X)\to A_{0}(X)$. 
The local Euler obstruction $\eu(Z)$ is a constructible function $X\to \ZZ$. The weighted Euler characteristic $\chi(X, \eu(Z))$ is given by
$$\sum_in_i\cdot \chi(\eu(Z)^{-1}(i)).$$

In Proposition 1.12 of \cite{Behrend}, Behrend proves 
\begin{theorem}
$$\int_{X}c^{M}(Z)=\chi(X, \eu(Z)),$$
if $X$ is  a proper scheme or global finite group quotient stack, or a grebe over a scheme. 
\end{theorem}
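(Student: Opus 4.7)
The plan is to deduce the formula from MacPherson's main theorem of \cite{MacPherson}, which furnishes a unique covariant natural transformation $c_*: F(-) \to A_*(-)$ from the functor of constructible functions to Chow homology, characterized by the normalization $c_*(\mathds{1}_Y) = c(TY) \cap [Y]$ for smooth $Y$. In the construction of \cite{MacPherson}, one defines $c_*$ on the basis $\{\eu(Z)\}$ of $F(X)$ provided by the isomorphism $T$ of (\ref{Algebraic_cycle_constructible-function}) by setting
\[
c_*(\eu(Z)) := c^{M}(Z),
\]
and then proves that this prescription is natural under proper pushforward; the normalization on smooth $Y$ is immediate since $\eu(Y)=\mathds{1}_Y$ and the Nash blow-up is the identity in that case. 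Thus the identification $c_*(\eu(Z)) = c^{M}(Z)$ is built into the construction, and it is the main input we shall cite from \cite{MacPherson}.

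With this identification in hand, the global index theorem becomes a formal consequence of naturality under the structure map $p: X \to \spec \CC$, which is proper by hypothesis. On the constructible-function side, proper pushforward is by definition the weighted Euler characteristic, so $p_*\eu(Z) = \chi(X, \eu(Z))\cdot\mathds{1}_{\spec\CC}$. On the Chow side, $p_*\colon A_*(X)\to A_0(\spec\CC)=\ZZ$ is the degree map. Chasing the naturality square,
\[
\int_X c^{M}(Z) \;=\; \deg\bigl(p_*c^{M}(Z)\bigr) \;=\; \deg\bigl(p_*c_*(\eu(Z))\bigr) \;=\; \deg\bigl(c_*p_*(\eu(Z))\bigr) \;=\; \chi(X,\eu(Z)).
\]

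For the two DM-stack variants I would extend $c_*$ by descent with $\QQ$-coefficients. If $X=[Y/G]$ with $Y$ a proper scheme and $G$ finite, then $F(X)=F(Y)^{G}$ and $A_*(X)_{\QQ}=A_*(Y)^{G}_{\QQ}$; since $\eu$ and $c^{M}$ are $G$-equivariant, averaging the scheme-level identity on $Y$ and dividing by $|G|$ yields the stacky identity. If $X$ is a gerbe over a proper scheme $\bar X$, the same descent applies via pullback to the coarse moduli. The main obstacle in each of these stacky cases is constructing $c_*$ in the DM setting; once this is available, the argument is purely formal. In the scheme case, the genuine obstacle, which we cite rather than reprove, is MacPherson's theorem itself, and more specifically the nontrivial verification that the assignment $c_*(\eu(Z)) := c^{M}(Z)$ is natural under proper pushforward.
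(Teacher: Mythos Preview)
Your proposal is correct and takes essentially the same approach as the paper: the scheme case is exactly MacPherson's index theorem (you unpack the naturality argument that the paper merely cites), and for the quotient-stack and gerbe cases you sketch the descent/averaging argument that the paper attributes to Behrend. The only difference is level of detail --- the paper's proof is a two-line citation, while you spell out the naturality chase and the $\QQ$-coefficient descent.
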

\begin{proof}
The scheme case is MacPherson's index theorem in \cite{MacPherson}.  Other cases were proved by Behrend \cite{Behrend} using properties of the local Euler obstruction. 
\end{proof}

In (\cite{Jiang}), the result is proved for proper DM stacks:
\begin{theorem}
$$\int_{X}c^{M}(Z)=\chi(X, \eu(Z)),$$
if $X$ is  a proper DM stack. 
\end{theorem}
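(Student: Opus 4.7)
The plan is to reduce the identity to the previous theorem (Behrend's case of gerbes over schemes) by stratifying $X$ and exploiting additivity on both sides through a MacPherson-type natural transformation. Define $c_*: F(X)_\QQ \to A_*(X)_\QQ$ by $c_*(\eu(Z')) := c^M(Z')$ on the basis $\{\eu(Z')\}$ supplied by (\ref{Algebraic_cycle_constructible-function}), extended $\QQ$-linearly. The theorem becomes the assertion $\int_X c_*(\eu(Z)) = \chi(X, \eu(Z))$, and both sides are $\QQ$-linear and additive in $\eu(Z) \in F(X)_\QQ$, so it suffices to verify the identity after any linear rewriting of $\eu(Z)$ in the basis $\{\eu(Z')\}$.

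First I would stratify $X$ as a disjoint union $X = \bigsqcup_\alpha X_\alpha$ of locally closed substacks, each of which is a gerbe over a scheme. Such a stratification exists: the locus where the inertia stack $I_X\to X$ has given rank is locally closed, each piece is a gerbe over its coarse moduli algebraic space, and we may refine further so that the coarse moduli of each gerbe is a scheme. By additivity of the weighted Euler characteristic, $\chi(X, \eu(Z)) = \sum_\alpha \chi(X_\alpha, \eu(Z)|_{X_\alpha})$. On each stratum I would rewrite $\eu(Z)|_{X_\alpha}$ in the basis $\{\eu(Z')\}_{Z'\subset \overline{X_\alpha}}$ of $F(\overline{X_\alpha})_\QQ$ as $\sum_{Z'} a_{\alpha, Z'}\eu(Z')$, and then apply Behrend's theorem to each closed gerbe $\overline{X_\alpha}$ to compute both sides of the stratum identity in terms of $\sum_{Z'} a_{\alpha, Z'}\int_{\overline{X_\alpha}} c^M(Z')$; a Noetherian induction on $\dim Z$ handles the boundary contributions arising from the closed strata $\overline{X_\alpha}\setminus X_\alpha$, since these sit in lower dimension and so by the induction hypothesis are controlled already.

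The main obstacle is that $c^M(Z)$ is built globally from the Nash blow-up $\nu:\widehat{Z}\to Z$ and does not split naturally along a stratification of $X$, so the decomposition $\int_X c^M(Z) = \sum_\alpha (\text{stratum contribution})$ is not formally manifest on the Chern--Mather side. I would bypass this entirely on the constructible-function side through the transformation $c_*$: since $c_*$ is defined on the basis $\{\eu(Z')\}$, it commutes with any $\QQ$-linear rewriting of a constructible function, including the one induced by restriction to strata, even though the underlying Nash blow-ups do not. The remaining subtlety, that Behrend's theorem is stated for closed gerbes over schemes whereas the $X_\alpha$ are only locally closed, is handled by applying Behrend's theorem to each closure $\overline{X_\alpha}$ in turn and verifying via Noetherian induction on $\dim Z$ that the boundary contributions from $\overline{X_\alpha}\setminus X_\alpha$ telescope to zero when summed over all strata, yielding the full identity $\int_X c^M(Z) = \chi(X,\eu(Z))$.
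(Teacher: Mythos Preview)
Your reduction has a genuine gap. You write ``apply Behrend's theorem to each closed gerbe $\overline{X_\alpha}$'', but the closure of a locally closed gerbe-over-scheme stratum is \emph{not} in general a gerbe over a scheme: the inertia jumps precisely along the boundary $\overline{X_\alpha}\setminus X_\alpha$. For instance, on $X=\PP(1,2)$ the open stratum is the scheme $\aA^1$ and its closure is all of $X$, which is not a gerbe over anything. So the previous theorem does not apply to $\overline{X_\alpha}$, and the identity $\int_{\overline{X_\alpha}} c^M(Z')=\chi(\overline{X_\alpha},\eu(Z'))$ you want to invoke is exactly the statement under proof, now for the proper DM stack $\overline{X_\alpha}$. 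Your Noetherian induction on $\dim Z$ does not rescue this: the boundary contributions you hope to telescope are evaluated by that same unavailable identity on $\overline{X_\alpha}$, so the argument is circular. More broadly, the device of ``bypassing'' the failure of $c^M$ to split along strata by passing to $c_*$ on constructible functions is illusory here, because the only tool you have for evaluating $\int c_*(\cdot)$ is Behrend's theorem, which demands a gerbe ambient.

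The paper's route is different in kind: rather than stratify and reduce to the gerbe case, it invokes a separate construction (the author's ProChow groups for DM stacks, extending Aluffi) that builds the MacPherson natural transformation $F(X)\to A_*^{\mathrm{pro}}(X)$ for \emph{all} DM stacks, equipped with a degree map that returns the weighted Euler characteristic by construction; specializing to proper $X$ recovers the ordinary Chow group and yields the formula directly. The functoriality is baked into the transformation rather than bootstrapped from gerbes.
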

\begin{proof}
In  \cite{Jiang}, the author defines ProChow group and classes for not necessarily proper DM stacks, generalizing the result in \cite{Aluffi}. 
There is a natural transformation functor from the group of constructible functions on DM stacks to the ProChow groups. 
Every ProChow group class of a DM stack $X$ gives a degree, which is the weighted Euler characteristic of the corresponding constructible function. 
In the case $X$ is proper, the ProChow group of $X$ coincides with the Chow group $A_*(X)$, and the Euler obstruction $\eu(Z)$ of the prime cycle $Z$ in $X$ is a constructible function, hence the degree of the  corresponding Chow group class (Chern-Mather class) gives its weighted Euler characteristic.  The formula in terms of Lagrangian intersection for orbifolds was addressed by Maulik and Treumann in \cite{MT}.
\end{proof}

\subsection{Lagrangian Intersection}

The degree zero Chern-Mather class $c_0^{M}(Z)$ and the weighted Euler characteristic  can be interpreted by Lagrangian intersections. We mainly follow Behrend's proof in Section 4.1 of \cite{Behrend}, only the last part involving small perturbation of zero section of vector bundles is not included in \cite{Behrend}. 

Fix an embedding $X\to M$ of the DM stack $X$ into a smooth DM stack $M$. We will explain the following diagram due to Behrend in 
Diagram (2) of \cite{Behrend}.
\begin{equation}\label{Key_Diagram}
\xymatrix{
\mathcal{Z}_*(X)\ar[r]^{\eu}_{\cong}\ar[dr]_{c^{M}_0}&F(X)\ar[r]^{\mbox{Ch}}_{\cong}\ar[d]^{c_0^{SM}}&\mathfrak{L}_{X}(\Omega_{M})\ar[dl]^{I(\cdot, [M])}\\
&A_0(X)&
}
\end{equation}
where $\mathcal{Z}_*(X)$ is the group of integral cycles of $X$, $F(X)$ is the group of constructible functions on $X$, and $\mathcal{L}_X(\Omega_M)$ is the subgroup of $\mathcal{Z}_n(\Omega_M)$ generated by the conic Lagrangian prime cycles supported on $X$. 
The maps $c^{M}_0$, $c_0^{SM}$ and $I(\cdot, [M])$ are degree zero Chern-Mather class, degree zero Chern-Schwartz-Mather class and the Lagrangian intersection with zero section of $\Omega_{M}$, respectively.  Note that in \cite{Behrend}, the notation of Lagrangian intersection with zero section is denoted by $0^{!}_{\Omega_{M}}(\cdot)$. 

We briefly explain the horizontal morphisms in the diagram.  The first map is the local Euler obstruction $\eu$ and it gives an isomorphism from $\mathcal{Z}_*(X)$ to $F(X)$, which is given by (\ref{Algebraic_cycle_constructible-function}). 

 Behrend  (Section 4.1, \cite{Behrend}) defined the following isomorphism of groups:
\begin{equation}\label{cycle_Lagrangian}
L:  \mathcal{Z}_{*}(X) \to \mathfrak{L}_{X}(\Omega_{M})
\end{equation}
which is given by
$$Z\mapsto (-1)^{\dim(Z)}N^*_{Z/M},$$
where $N^*_{Z/M}$ is the closure of the conormal bundle of smooth part of $Z$ inside $M$. 
Conversely there is an isomorphism:
\begin{equation}\label{Lagrangian_cycle}
\pi:  \mathfrak{L}_{X}(\Omega_{M})\to \mathcal{Z}_{*}(X) 
\end{equation}
which is given by
$$V\mapsto (-1)^{\dim(\pi(V))}\pi(V),$$
where $\pi: V\to X$ is the projection.  
Then the morphism $\mbox{Ch}$ is defined by the isomorphism $\eu$ and the morphism $L$ defined above. 
\\

\textbf{Proof of  Diagram \ref{Key_Diagram}:}
Let $Z\subset X$ be a prime cycle.  We may take the Nash blow-up $\nu: \widehat{Z}\to Z$ by embedding $Z\subset M$ into the smooth DM stack $M$. Let 
$$\mu: \widetilde{M}:=\Gr_d(\Omega_M)\to M$$
be the Grassmannian of rank $d$ quotient of $\Omega_M$. 
The Nash blow-up $\widehat{Z}$ is the closure inside $\widetilde{M}$ of  the section $Z\hookrightarrow \widetilde{M}$ for the smooth locus of $Z$.  Then there is an exact sequence of vector bundles:
$$0\to N|_{\widehat{Z}}\longrightarrow \mu^*\Omega_{M}|_{\widehat{Z}}\longrightarrow T^*Z\to 0$$
where $N|_{\widehat{Z}}$ is the kernel of the surjective map of the right arrow.  We have the following commutative diagram:
$$
\xymatrix
{
\widehat{Z}\ar[r]\ar[d]& N|_{\widehat{Z}}\ar[d]\\
[M]\ar[r]& \mu^*\Omega_{M}|_{\widehat{Z}}.
}
$$
So from Fulton Chapter 6 of  \cite{Fulton}, 
\begin{align*}
I(N|_{\widehat{Z}}, [M])&=c(\Omega_{M}|_{\widehat{Z}})\cdot s(\widehat{Z}, N|_{\widehat{Z}})\\
&=c(\Omega_{M}|_{\widehat{Z}})\cdot c(N|_{\widehat{Z}})^{-1}\cdot [\widehat{Z}]\\
&=c(T^*Z)\cdot [\widehat{Z}].
\end{align*}

From Proposition 4.6 in \cite{Behrend}, we have diagram:
\begin{equation}\label{Behrend_key_diagram}
\xymatrix{
\widehat{Z}\ar[r]^{\nu}\ar[d]_{0}& Z\ar[r]\ar[d]_{0}& M\ar[d]\\
N|_{\widehat{Z}}\ar[r]^{\eta}& N_{Z/M}^*\ar[r]& \Omega_{M}
}
\end{equation}
where $\eta: N|_{\widehat{Z}}\to N_{Z/M}^*$ is a proper birational map of integral stacks, where $N_{Z/M}^*$ is the closure inside 
$\Omega_{M}$ of the conormal bundle of the smooth part of $Z$.  Hence
\begin{align*}
c_0^{M}(Z)&=(-1)^{\dim(Z)}\nu_{*}(c(T^*Z\cap [\widehat{Z}]))\\
&=(-1)^{\dim(Z)}\nu_{*}(I(N|_{\widehat{Z}}, [M]))\\
&=(-1)^{\dim(Z)}I(\eta_{*}N|_{\widehat{Z}}, [M])\\
&=(-1)^{\dim(Z)}I(N_{Z/M}^*, [M]).
\end{align*}
$\square$

\subsection{Analytic version of the Lagrangian intersection}
In practice, it is often useful to do analytic or real version of Lagrangian intersection as in Theorem 9.5.3 and Theorem 9.7.11 of  \cite{KS}. 

We choose a Hermitian metric for the cotangent bundle $\Omega_{M}$ such that it is naturally isomorphic to the tangent bundle $TM$.  Let 
$$\psi: M\to\RR$$
be a real  $C^2$-function. Then the graph 
$\Gamma=d\psi$ is a real Lagrangian cycle in $\Omega_M$.  Suppose that 
\begin{enumerate}
\item $\{x\in M: \psi(x)\leq t\}$ is compact for every $t$;
\item $\Gamma\cap N_{Z/M}^*$ is compact. 
\end{enumerate}
Then 

\begin{theorem}\label{main_theorem_1}
The Lagrangian intersection:
$$I(N_{Z/M}^*, \Gamma)=I(N_{Z/M}^*, [M]).$$
\end{theorem}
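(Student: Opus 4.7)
The plan is to interpolate between the zero section $[M]$ and the graph $\Gamma = d\psi$ through the one-parameter family of Lagrangian sections $\Gamma_t := d(t\psi)$ for $t \in [0,1]$, and to invoke the homotopy invariance of Lagrangian intersection numbers under a family of sections whose intersection with $N_{Z/M}^*$ remains in a compact part of $\Omega_M$. Each $\Gamma_t$ is a closed Lagrangian submanifold of $\Omega_M$, being the graph of the exact $1$-form $d(t\psi)$, with $\Gamma_0 = [M]$ and $\Gamma_1 = \Gamma$.

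Next I would verify that the total intersection locus $\bigcup_{t \in [0,1]}(\Gamma_t \cap N_{Z/M}^*)$ is relatively compact in $\Omega_M$. Using the conicity of $N_{Z/M}^*$ in the fiber directions, for $t > 0$ the projection $\pi(\Gamma_t \cap N_{Z/M}^*)$ agrees with $\pi(\Gamma \cap N_{Z/M}^*)$, which is compact by hypothesis (2); thus the base points lie in a fixed compact set and the fiber coordinate $t\, d\psi$ is uniformly bounded on this set. The potential escape of intersection points along the zero section as $t \to 0$ is controlled by hypothesis (1), since the sublevel-set compactness together with the fact that the critical locus of $\psi|_{Z^{\circ}}$ corresponds precisely to $\Gamma \cap N_{Z/M}^*$ prevents any accumulation at infinity. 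A standard Bruhat--Whitney / curve-selection argument, similar to the one used in Section \ref{local_Euler_obstruction}, can be invoked if more delicate control is needed at the limit points.

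With uniform compactness in hand, the conclusion follows directly from the invariance of intersection numbers under such proper Lagrangian deformations, which is precisely the content of Theorems 9.5.3 and 9.7.11 of \cite{KS} (see also the appendix of \cite{KL}): applied to the family $\Gamma_t$ against the fixed conic Lagrangian cycle $N_{Z/M}^*$, one obtains $I(N_{Z/M}^*, \Gamma_0) = I(N_{Z/M}^*, \Gamma_1)$, which is the assertion. The main obstacle is the compactness verification throughout the deformation, particularly the behavior near $t = 0$, where \emph{a priori} the intersection $[M] \cap N_{Z/M}^*$ equals $Z$ and need not be compact without the stated hypotheses; it is precisely to secure this that the two conditions on $\psi$ are imposed. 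Once this technical input is established, the theorem is a direct application of the cited microlocal invariance results.
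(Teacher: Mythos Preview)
Your proposal is correct and takes essentially the same approach as the paper: both arguments reduce the statement to Theorems~9.5.3 and~9.7.11 of \cite{KS}. The paper's proof is in fact a bare two-line citation of those results, whereas you have unpacked the underlying deformation argument (the family $\Gamma_t = d(t\psi)$) and spelled out the compactness verification that justifies applying them; this extra detail is sound and adds nothing foreign to the paper's method.
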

\begin{proof}
 As a cycle, the cycle $N_{Z/M}^*$ is Lagrangian.  So
 from Theorem 9.5.3 and 9.7.11 in \cite{KS},
$$I(\Gamma, N_{Z/M}^*)=I(N_{Z/M}^*, [M]).$$ 
\end{proof}

\section{Application to Donaldson-Thomas invariants}\label{Application}

\subsection{Symmetric  obstruction theory}\label{symmetric_obstruction_theory}

The notion of symmetric perfect obstruction theory was defined by Behrend in \cite{Behrend}, and its main application is to Donaldson-Thomas theory.   The Donaldson-Thomas type moduli space of stable simple complexes over a smooth Calabi-Yau threefold $Y$ admits a symmetry obstruction theory. 

Let $X$ be a DM stack and $E_X$ an object in  the derived category $D^b(\Coh(X))$ of coherent sheaves on $X$ with amplitude contained in $[-1,0]$.
From \cite{BF} and \cite{LT},   $E_X$ is a  perfect obstruction theory on $X$ if there  exists a morphism 
$$\varphi: E_X\to \LL_{X}$$ 
in the derived category of coherent sheaves on $X$ such that 
$h^0(\varphi)$ is an isomorphism and $h^{-1}(\varphi)$ is surjective.
Here $\LL_X$ is the truncated cotangent complex of $X$. 
From \cite{BF}, the intrinsic normal cone $\cc_{X}$ is a subcone stack inside the 
cone stack $h^1/h^0(\LL_X^{\vee})$.

We assume that there exists a global resolution $E=[E^{-1}\stackrel{\phi}{\rightarrow} E^0]$ of $E_X$, which means there is a quasi-isomorphism
$$E\to E_X$$
in the derived category $D^b(\Coh(X))$ of coherent sheaves on $X$ and $E^i$ are vector bundles for $i=-1,0$.
Let 
$$E^{\vee}=[\phi^{\vee}: E_0\to E_1]$$
be the dual of of  $E$, where 
$E_0=(E^0)^{*}$, and $E_1=(E^{-1})^{*}$. 
The perfect obstruction theory gives  a closed immersion
of cone stacks $h^1/h^0(\LL_X^{\vee})\hookrightarrow h^1/h^0(E_X^\vee)=[E_1/E_0]$.
The latter Artin stack $h^1/h^0(E_X)^{\vee}=[E_1/E_0]$ is called the bundle stack and the intrinsic normal cone $\cc_X$ is a subcone stack in it.

The perfect obstruction theory $E_X$ is \textbf{symmetric} if there is a bilinear form
$$\Theta: E_X\stackrel{\cong}{\rightarrow} E_X^{\vee}[1]$$
which is non-degenerate and symmetric.  Hence this implies that $\rk E_X=\rk(E_X^{\vee}[1])=-\rk E_X^{\vee}=-\rk E_X$, so 
$\rk E_X=0$.
The obstruction sheaf $\ob=h^1(E_X^\vee)=h^0(E_X^{\vee}[1])=h^0(E_X)=\Omega_X$.

We fix an embedding $i: X\hookrightarrow M$ into a smooth DM stack $M$. Then
$$\Phi: \Omega_M|_{X}\to \ob=\Omega_X$$
 is an epimorphism of coherent sheaves.  Let $\mbox{cv}$ be the coarse moduli space of the intrinsic normal cone $\cc_X$. 
 Then
there is a 
Cartesian  diagram
\[
\xymatrix{
C\ar[r]^{}\ar[d]_{}& \Omega_M|_{X}\ar[d]^{}\\
\mbox{cv}\ar[r]^{}& \ob.
}
\]
The cone $C$ is called the obstruction cone of the symmetric perfect obstruction theory $E_X$.
\begin{proposition}(Theorem 4.9 in \cite{Behrend})
The cone $C\subset \Omega_{M}$ is conic Lagrangian. 
\end{proposition}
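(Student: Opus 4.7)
My plan is to argue \'etale-locally on $X$, since the Lagrangian property (both half-dimensionality and isotropy under the canonical symplectic form on $\Omega_M$) can be checked on an \'etale cover of $M$. After passing to such a chart, I would assume $X \hookrightarrow M$ is a closed embedding of affines and that the symmetric obstruction theory admits a two-term locally free resolution $E_X = [E^{-1} \xrightarrow{\phi} E^0]$ with $E^0 = \Omega_M|_X$. The symmetry isomorphism $\Theta: E_X \xrightarrow{\sim} E_X^\vee[1]$ then forces $E^{-1} \cong (E^0)^* = T_M|_X$, and $\phi: T_M|_X \to \Omega_M|_X$ becomes a symmetric homomorphism of locally free sheaves.

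Next I would unwind $C$ in this chart. The bundle stack $h^1/h^0(E_X^\vee)$ is the quotient $[\Omega_M|_X / T_M|_X]$, the intrinsic normal cone $\cc_X$ sits in it as a closed substack, and its preimage along the atlas $\Omega_M|_X \twoheadrightarrow [\Omega_M|_X / T_M|_X]$ agrees with the obstruction cone $C$ defined by the Cartesian square in the proposition. Concretely, if $X = Z(f_1,\ldots, f_r)$ in $M$, the lift $C \subset \Omega_M|_X$ contains the conormal sheaf $N_{X/M}^*$ as the fiber of $\Phi$, and its cycle-theoretic contribution over $X$ is generated by the Hessian-type information encoded in $\phi$, so that $C$ fibers over $X$ with fiber dimension $\rk N_{X/M}^*$.

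Finally I would verify the two Lagrangian conditions. The dimension count $\dim C = \dim X + \rk N_{X/M}^* = \dim M$ matches the half-dimension of the $2\dim M$-dimensional symplectic variety $\Omega_M$, using $\rk E_X = 0$. For isotropy, in local Darboux coordinates $(x_i, p_i)$ on $\Omega_M$ with symplectic form $d\theta = \sum dp_i \wedge dx_i$, a tangent vector to $C$ at a smooth point has the form $(\delta x, \phi(\delta x))$ plus a purely cotangent direction lying inside the Lagrangian subspace $N_{X/M}^*$ (which is automatically isotropic); the symplectic pairing of two such tangent vectors $(\delta x_1, \phi(\delta x_1))$ and $(\delta x_2, \phi(\delta x_2))$ equals $\phi(\delta x_1) \cdot \delta x_2 - \phi(\delta x_2) \cdot \delta x_1$, which vanishes precisely because $\phi$ is symmetric. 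The main obstacle will be the setup step: arranging an \'etale-local resolution of $E_X$ with $E^0 = \Omega_M|_X$ in a way compatible with the symmetric structure $\Theta$, so that the symmetry of the matrix representing $\phi$ really does faithfully encode $\Theta$ and the concrete model of $C$ described above coincides with the obstruction cone of the proposition; once this compatibility is in place the Lagrangian property reduces to the elementary symmetric-matrix computation just sketched.
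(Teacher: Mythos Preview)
The paper does not supply its own proof of this proposition: it is stated with the citation ``(Theorem 4.9 in \cite{Behrend})'' and no proof environment follows. So there is nothing in the paper to compare against; you are attempting to reconstruct Behrend's original argument.

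Your outline has the right strategic shape (reduce \'etale-locally, use the symmetry $\Theta$ to produce a symmetric map $\phi$, then check half-dimension and isotropy), but the concrete description of $C$ and its tangent spaces contains a genuine gap. You write that ``$C$ fibers over $X$ with fiber dimension $\rk N_{X/M}^*$'' and that a tangent vector to $C$ at a smooth point is ``$(\delta x,\phi(\delta x))$ plus a purely cotangent direction lying inside $N_{X/M}^*$.'' Neither claim is correct in general: $N_{X/M}^*$ is not locally free when $X$ is not a local complete intersection, so the dimension count $\dim X+\rk N_{X/M}^*=\dim M$ is not well-posed; and $C$ is a cone, not a vector bundle, so its tangent space at a smooth point is not simply the graph of $\phi$ shifted by conormal directions. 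The isotropy computation you sketch therefore does not apply to the actual tangent vectors of $C$.

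What makes Behrend's proof go through is a sharper local model, already recorded in the paper in the Example of Section~5.3: \'etale-locally the symmetric obstruction theory is the one attached to an \emph{almost closed} $1$-form $\omega=\sum f_i\,dx_i$ on a smooth $M$ with $X=Z(\omega)$, and $\phi=\nabla\omega$. In that model the obstruction cone is identified with the normal cone $C_{X/M}$ embedded in $\Omega_M$ via the differentials $df_i$, and the Lagrangian property is then checked directly from $d\omega|_X=0$ (which is exactly what ``almost closed'' buys you). Your symmetric-matrix computation is the infinitesimal shadow of this, but without the almost-closed-form model you do not have a correct description of $C$ on which to run it. The step you flagged as ``the main obstacle'' is indeed the crux, and it is not just a compatibility check: it is the substantive local structure theorem for symmetric obstruction theories.
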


\begin{definition}
The virtual fundamental class $[X]^{\vir}$ is given by
$$[X]^{\vir}=0^{!}_{\Omega_{M}}([C])\in A_{0}(X).$$
The Donaldson-Thomas type invariant of $X$ is defined by
$$\#^{\vir}(X)=\int_{[X]^{\vir}}1$$
when $X$ is proper.
\end{definition}

\subsection{Weighted Euler characteristic}\label{weighted_Euler_char}

The DM stack $X$ has a canonical integral cycle $\mathfrak{c}_X\in \mathcal{Z}_*(X)$ in Section 1.1 of \cite{Behrend}. 
This cycle is defined as follows:  on a local chart $U\to X$, which is \'etale,  and an embedding $U\hookrightarrow M$, the cycle is 
$$\mathfrak{c}_X|_{U}=\sum_{i}(-1)^{\dim(\pi(C_i))}\cdot \mbox{mult}(C_i)\cdot \pi(C_i),$$
where the sum is over all irreducible components $C_i$ of  the normal cone $C_{U/M}$, and $\pi: C_{U/M}\to U$ is the projection. 
The set $\pi(C_i)$ is an irreducible closed subscheme or substack in $U$ and $\mbox{mult}(C_i)$ is the multiplicity of the component $C_i$ at the generic point.  Behrend proves that $\mathfrak{c}_{X}|_{U}$ is independent to the embedding and these local data glue to give the canonical integral cycle $\mathfrak{c}_X$. 

\begin{definition}
The Behrend function $\nu_X$ is defined by:
$$\nu_{X}=\eu(\mathfrak{c}_X),$$
which is the local Euler obstruction of the cycle $\mathfrak{c}_X$. 
\end{definition}

Look at Diagram 
(\ref{Key_Diagram}),  when applying the degree zero Chern-Mather class to the Behrend function we get 
$c^{M}_0(\nu_X)\in A_0(X)$ and also $\mbox{Ch}(\nu_X)=C$.  From the above section, the virtual fundamental class $[X]^{\vir}$ is the Lagrangian intersection of the cone 
$C$ with the zero section of $\Omega_M$.  Hence Behrend proves that 
\begin{equation}
[X]^{\vir}=c^{M}_0(\nu_X).
\end{equation}
So:
\begin{theorem}\label{Behrend}
If $X$ is proper and admits a symmetric obstruction theory $E_X$, then 
$$\chi(X, \nu_X)=\int_{[X]^\vir}1.$$
\end{theorem}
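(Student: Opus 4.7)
The plan is to extract the identity by chasing the key diagram \eqref{Key_Diagram} with the canonical cycle $\mathfrak{c}_X$ as input, and then applying MacPherson's global index theorem (which applies since $X$ is proper).

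First I would unpack the right-hand side of the stated identity. By definition of the virtual fundamental class in Section \ref{symmetric_obstruction_theory}, $[X]^\vir = 0^!_{\Omega_M}([C]) = I(C,[M])$, where $C$ is the obstruction cone. Since $C$ is Lagrangian (by the cited theorem of Behrend) and its coarse image under $\pi\colon C\to X$ realizes precisely the canonical cycle $\mathfrak{c}_X$ (with signs $(-1)^{\dim\pi(C_i)}$ matching the isomorphism \eqref{Lagrangian_cycle}), the Lagrangian cycle $\mathrm{Ch}(\nu_X) = L(\mathfrak{c}_X)$ equals $[C]$ as an element of $\mathfrak{L}_X(\Omega_M)$. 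Therefore
\begin{equation*}
\int_{[X]^\vir}1 \;=\; \deg I(C,[M]) \;=\; \deg I\bigl(\mathrm{Ch}(\nu_X),[M]\bigr).
\end{equation*}

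Next I would pass to the left of diagram \eqref{Key_Diagram}. The commutativity of the triangle, which was established by decomposing $\mathfrak{c}_X$ into its prime components $Z$ and computing $I(N^*_{Z/M},[M]) = (-1)^{\dim Z}\nu_*\bigl(c(T^*Z)\cap[\widehat{Z}]\bigr) = (-1)^{\dim Z} c^M(Z)$ via diagram \eqref{Behrend_key_diagram} in the preceding subsection, yields
\begin{equation*}
I\bigl(\mathrm{Ch}(\nu_X),[M]\bigr) \;=\; c_0^M(\mathfrak{c}_X) \;\in\; A_0(X).
\end{equation*}
Taking degrees and applying MacPherson's global index theorem, which is valid because $X$ is proper, gives
\begin{equation*}
\deg c_0^M(\mathfrak{c}_X) \;=\; \int_X c^M(\mathfrak{c}_X) \;=\; \chi\bigl(X,\eu(\mathfrak{c}_X)\bigr) \;=\; \chi(X,\nu_X),
\end{equation*}
by the definition $\nu_X = \eu(\mathfrak{c}_X)$.

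The main obstacle, in principle, is verifying that the obstruction cone $C$ realizes the canonical cycle $\mathfrak{c}_X$ with the correct signs, i.e.\ that $\pi_*[C] = \mathfrak{c}_X$ as signed cycles, so that $\mathrm{Ch}(\nu_X) = [C]$. This identification is what makes the diagram chase go through, and it rests on the local description of $\mathfrak{c}_X$ as $\sum(-1)^{\dim\pi(C_i)}\mathrm{mult}(C_i)\pi(C_i)$ matching the decomposition of the Lagrangian $[C]$ into its irreducible conormal components $(-1)^{\dim\pi(C_i)}N^*_{\pi(C_i)/M}$. Once this is in place, the theorem is essentially the composition of two already-established horizontal arrows in \eqref{Key_Diagram}, followed by MacPherson's index theorem on the proper stack $X$.
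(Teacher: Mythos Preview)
Your proposal is correct and follows essentially the same route the paper sketches in Section~\ref{weighted_Euler_char}: identify $\mathrm{Ch}(\nu_X)=[C]$, use the commutativity of Diagram~\eqref{Key_Diagram} to rewrite $[X]^{\vir}=I(C,[M])$ as $c_0^M(\mathfrak{c}_X)$, and then invoke the global index theorem for the proper $X$. The paper itself simply records this chain of identifications and then cites Theorem~4.18 of \cite{Behrend} for the formal proof, so your write-up is in fact more explicit than what appears here.
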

This was proved in Theorem 4.18 of \cite{Behrend}.

\subsection{$\CC^*$-equivariant   obstruction theory}

In this section we assume that there is a $\CC^*$ action on the scheme or DM stack $X$. 
We talk about  $\CC^*$-equivariant  obstruction theory on $X$. 
The general $G$-equivariant obstruction theory and $G$-equivariant symmetric obstruction theory  for an algebraic group $G$ has been discussed  by  Behrend and Fantechi in \cite{BF2}. 

Let $D^b(\Coh(X))^{\CC^*}$ be the derived category of $\CC^*$ equivariant coherent sheaves over $X$. 
From  Section 2.2 in \cite{BF2}, 
\begin{definition}
A $\CC^*$-\textbf{equivariant} perfect obstruction theory on $X$ is a morphism $E_X\to \LL_{X}$ in the category 
$D^b(\Coh(X))^{\CC^*}$.  As mentioned by Behrend, this is originally sue to Graber-Pandharipande. 

A  $\CC^*$-\textbf{symmetric equivariant} obstruction theory is given by $(E_X\to \LL_X, \Theta: E_X\to E_X^{\vee}[1])$ of morphisms in 
$D^b(\Coh(X))^{\CC^*}$, such that $E_X\to \LL_X$ is an equivariant perfect obstruction theory, and 
$\Theta: E_X\to E_X^{\vee}[1]$ is an isomorphism satisfying $\Theta^{\vee}[1]=\Theta$.
\end{definition}

We include an example of Behrend here so that an equivariant symmetric obstruction theory locally always has the following form.
\begin{Example}
In Section 3.4 of \cite{Behrend}, Behrend proves that a symmetric obstruction theory on $X$ locally is given by an almost closed one form. 
Let $\omega=\sum_{i=1}^{n}f_idx_i$ be an almost one form on $M\cong \aA^n$ so that $X=Z(\omega)$ is the zero locus of $\omega$. 
Then $H(\omega)=[T_M|_{X}\stackrel{\bigtriangledown \omega}{\rightarrow}\Omega_{M}|_{X}]$ gives the symmetric obstruction theory 
$E_X$. 

Let $\CC^*$ act on $\aA^n$ by setting the degree of $x_i$ to be $r_i\in \ZZ$. Each $f_i$ is homogeneous with respect to these degrees and denote the degree of $f_i$ by $n_i\in \ZZ$. Then the zero locus $X$ admits a $\CC^*$ action.
If we let $\CC^*$ act on the tangent $T_M$ by letting the degree of $\frac{\partial}{\partial x_i}$ to be $n_i$, then $H(\omega)\to \LL_X$ defines a $\CC^*$ perfect equivariant obstruction theory. 

If $n_i=-r_i$, then the form $\omega$ is an invariant element of $\Gamma(M, \Omega_M)$.  Then we have an ``equivariant symmetric" obstruction theory. 
\end{Example}

The following result is due to Proposition 2.6 of Behrend-Fantechi \cite{BF2}.

\begin{proposition}
Let $X$ be an affine $\CC^*$-scheme with a fixed point $P$ and let $n=\dim T_X|_{P}$.  If $X$ is endowed with a symmetric equivariant obstruction theory $E_X\to \LL_X$.  Then there exists an invariant affine open neighbourhood $U$ of $P$ in $X$, an equivariant closed embedding $U\hookrightarrow M$ into a smooth $\CC^*$-scheme $M$ of dimension $n$ and an invariant almost closed one form $\omega$ on $M$ such that $X=Z(\omega)$.  
\end{proposition}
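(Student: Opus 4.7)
The plan is to proceed in three stages: (i) produce an equivariant closed embedding of a $\CC^*$-invariant affine open neighbourhood of $P$ into a smooth $\CC^*$-scheme $M$ of dimension $n$; (ii) invoke Behrend's non-equivariant local model from Section 3.4 of \cite{Behrend} to write $X=Z(\omega)$ for some almost closed one form $\omega$ on $M$; and (iii) use the equivariance of the symmetric structure to arrange that $\omega$ is $\CC^*$-invariant.

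For (i), since $P$ is $\CC^*$-fixed the maximal ideal $\mfr_P\subset\cO_{X,P}$ is $\CC^*$-stable, so the cotangent space $T^*_X|_P=\mfr_P/\mfr_P^2$ decomposes into $\CC^*$-weight spaces of total dimension $n$. Picking homogeneous lifts $\bar{x}_1,\dots,\bar{x}_n\in\mfr_P$ of a weight basis and applying Nakayama, these define, after shrinking to a $\CC^*$-invariant affine open $U\ni P$ (obtained by localising at a homogeneous element of $\cO_X$ not vanishing at $P$), an equivariant closed embedding $U\hookrightarrow M:=\aA^n$ in which the coordinate $x_i$ carries weight $r_i:=\deg\bar{x}_i$. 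Step (ii) is then immediate: the restriction of the symmetric obstruction theory to $U$ together with this embedding presents $E_U$ by a two-term resolution $[T_M|_U\to \Omega_M|_U]$, and the construction recalled in Section 3.4 of \cite{Behrend} produces the desired almost closed $\omega$ on $M$ with $Z(\omega)=U$.

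Step (iii) is the essential equivariant content. Because both the obstruction theory and the symmetric isomorphism $\Theta:E_X\stackrel{\sim}{\to} E_X^\vee[1]$ are $\CC^*$-equivariant, and since $\Omega_M|_U$ carries weights $-r_i$ (dual to $T_M|_U$), the symmetric pairing forces each component $f_i$ of $\omega=\sum f_i\,dx_i$ to have weight $-r_i$; equivalently, this is the condition $n_i=-r_i$ from the preceding example, which makes $\omega$ a $\CC^*$-invariant section of $\Omega_M$. The main obstacle I expect is precisely this weight-matching step: one has to verify that Behrend's construction of $\omega$ from a chosen splitting of $E_U$ can be executed equivariantly, i.e.\ that both the splitting and the almost-closed primitive can be chosen in weight-zero spaces. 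This rests on two facts that must be made precise: first, the ideal $I_U\subset\cO_M$ is $\CC^*$-invariant, so it is generated by homogeneous elements of the prescribed weights $-r_i$, which allows one to replace the $f_i$ by their weight-$(-r_i)$ components without changing $Z(\omega)=U$; second, the almost-closedness $d\omega|_U=0$ is preserved under projection onto the invariant part because it is dictated entirely by equivariance of $\Theta$, so passing to the weight-zero summand of $\omega$ still represents the same symmetric obstruction theory.
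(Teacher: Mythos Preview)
The paper does not give its own proof of this proposition at all: it simply attributes the result to Proposition~2.6 of Behrend--Fantechi \cite{BF2} and moves on. So there is no argument in the paper to compare against; what you have written is strictly more than the paper supplies.

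Your outline is the right shape and matches the strategy one finds in \cite{BF2}: an equivariant minimal embedding at $P$, Behrend's non-equivariant local model, then a weight argument using the equivariant symmetric pairing. Step~(i) is standard and fine. Step~(ii) is a correct appeal. Step~(iii), as you yourself flag, is where the actual work sits, and your sketch there is not yet a proof. Two points need tightening. First, the claim that $I_U$ ``is generated by homogeneous elements of the prescribed weights $-r_i$'' is not a consequence of $\CC^*$-invariance of $I_U$ alone; invariance only gives homogeneous generators of \emph{some} weights. The weight constraint $n_i=-r_i$ comes from the equivariant symmetry $\Theta$, not from the ideal, so the logic should be: choose an equivariant resolution $[T_M|_U\to\Omega_M|_U]$ with its natural weights, and then the equivariance of $\Theta$ forces the map (hence the $f_i$) to be homogeneous of the required degrees. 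Second, your ``project to the weight-zero summand'' manoeuvre needs to be justified: you must show that the equivariant splitting and the lift producing $\omega$ can themselves be chosen equivariantly (complete reducibility of $\CC^*$-representations makes this possible), rather than averaging an inequivariant $\omega$ after the fact, since averaging a non-invariant almost closed form need not preserve the zero locus or the almost-closedness simultaneously. With those two refinements made precise, your argument would be complete.
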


We actually don't need $X$ to admit an equivariant ``symmetric" obstruction theory, only a $\CC^*$-equivariant obstruction theory $E_X\to \LL_X$.  

\begin{theorem}
Let $X$ be a scheme which admits a symmetric obstruction theory $E_X$. Furthermore assume that there is a  $\CC^*$ action on $X$ with fixed point scheme $F\subset X$,  such that  $E_X\to \LL_X$ is a $\CC^*$-equivariant  obstruction theory.  
Then 
$$\chi(X, \nu_X)=\chi(F, \nu_{X}|_{F}),$$
where $\nu_X|_{F}$ is the restriction of $\nu_X$ to $F$.
Moreover if $X$ is proper, then 
$\int_{[X]^\vir}1=\chi(X, \nu_X)=\chi(F, \nu_{X}|_{F})$. 
\end{theorem}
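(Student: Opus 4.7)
The plan is to reduce the theorem to a purely constructible-function statement by showing that the Behrend function is $\CC^*$-invariant, and then to invoke the standard vanishing of the Euler characteristic of a locally free $\CC^*$-orbit.

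First I would establish that $\nu_X$ is a $\CC^*$-invariant constructible function on $X$. Recall from Section \ref{weighted_Euler_char} that $\nu_X=\eu(\mathfrak{c}_X)$ and that the integral cycle $\mathfrak{c}_X$ is defined intrinsically, via the normal cone of any étale local embedding $U\hookrightarrow M$. The $\CC^*$ action on $X$ is by automorphisms; given any $t\in\CC^*$ and a local embedding $U\hookrightarrow M$, one obtains a translated embedding $U\hookrightarrow M$ via $t^{-1}$, and the two normal cones are identified by the induced isomorphism. Thus the local recipe for $\mathfrak{c}_X$ commutes with $t$, so $t^*\mathfrak{c}_X=\mathfrak{c}_X$ as cycles. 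Because the Nash blow-up construction (Section \ref{Nash_blow_up}) and MacPherson's Euler obstruction are intrinsic to the subscheme $Z$, the function $\eu(Z)$ is invariant under any automorphism taking $Z$ to $Z$; linearity in $\mathcal{Z}_*(X)$ then gives $\nu_X(t\cdot x)=\nu_X(x)$ for all $t\in\CC^*$ and all $x\in X$.

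Next I would invoke the standard $\CC^*$-localization for weighted Euler characteristics. Stratify $X$ into the constructible pieces $X_n=\nu_X^{-1}(n)$, each of which is $\CC^*$-invariant by Step~1. On $X_n\setminus F$ the $\CC^*$ action has no fixed points, so every orbit closure is a quasi-affine curve of the form $\CC^*/\mu_k$; these have Euler characteristic zero, and hence so does any constructible $\CC^*$-invariant set on which the action is fixed-point-free (for instance by stratifying so that the quotient exists as an algebraic space and using multiplicativity $\chi(X_n\setminus F)=\chi((X_n\setminus F)/\CC^*)\cdot\chi(\CC^*)=0$). Therefore
\begin{equation*}
\chi(X,\nu_X)=\sum_n n\cdot\chi(X_n)=\sum_n n\cdot\bigl(\chi(X_n\cap F)+\chi(X_n\setminus F)\bigr)=\sum_n n\cdot\chi(X_n\cap F)=\chi(F,\nu_X|_F).
\end{equation*}

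Finally, if $X$ is proper, Theorem \ref{Behrend} gives $\int_{[X]^{\vir}}1=\chi(X,\nu_X)$, which combined with the previous identity yields the chain $\int_{[X]^{\vir}}1=\chi(X,\nu_X)=\chi(F,\nu_X|_F)$.

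The main obstacle is the $\CC^*$-invariance of $\nu_X$: one must check that the local data defining $\mathfrak{c}_X$ transport correctly along the $\CC^*$-action despite the choice of embedding, and that the topological/algebraic data computing $\eu(Z)(P)$ (Nash blow-up, Segre class of $\nu^{-1}(P)$, as in Theorem \ref{Euler_Obstruction_Algebraic_formula}) commute with automorphisms of the ambient stack. Once this invariance is in hand, the rest is a formal application of the $\CC^*$-localization principle for constructible functions and Behrend's weighted Euler characteristic formula.
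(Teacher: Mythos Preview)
Your proposal is correct and follows essentially the same argument as the paper: the paper's proof simply asserts that $\nu_X$ is constant on nontrivial $\CC^*$-orbits (what you call $\CC^*$-invariance of $\nu_X$), that the Euler characteristic of a $\CC^*$-scheme without fixed points vanishes, and then invokes Theorem~\ref{Behrend} for the proper case. The only difference is that you spell out in detail why $\nu_X$ is $\CC^*$-invariant via the intrinsic nature of $\mathfrak{c}_X$ and of the Euler obstruction, whereas the paper takes this as a known ``property of Behrend function.''
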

\begin{proof}
From the property of the Behrend function, $\nu_X$ is constant on the nontrivial $\CC^*$ orbits, and the Euler characteristic of a $\CC^*$-scheme without fixed points is zero. So the only contribution comes from the fixed point locus and 
$\chi(X, \nu_X)=\chi(F, \nu_{X}|_{F})$. If $X$ is proper, the last statement is Behrend's theorem Theorem \ref{Behrend}. 
\end{proof}

\subsection{Kiem-Li construction}
We denote the $\CC^*$ action on $X$ by:
$$\mu: \CC^*\times X\to X.$$
Let $\lambda$ be the parameter of the group $\CC^*$ and consider the following  vector field
$$v: X\to T_X$$
given by $v=\frac{d}{d\lambda}(\mu(\lambda\cdot x))|_{\lambda=1}$. 
The zero locus of such vector field is $F\subset X$, the fixed point locus. The vector field defines a cosection 
$$\sigma: \Omega_{X}\to \cO_{X}$$
by taking dual in the sense of \cite{KL}. The degenerate locus $D(\sigma)$ is exactly the fixed point locus $F\subset X$.

The obstruction theory $E_{X}$ defines a bundle stack 
$\mathbf{E}:=h^1/h^0((E_{X})^{\vee})$ in the sense of Behrend-Fantechi in Section 2 of \cite{BF}, such that 
$h^1(\mathbf{E})=\Omega_{X}$. 
Let $U:=X\setminus F$. Then $U$ is an open subset of $X$. 
The cosection $\sigma$ gives a surjective morphism 
\begin{equation}\label{cosection.bundle.map}
\Omega_{X}|_{U}=h^1((E_{X})^{\vee})|_{U}\to \cO_{U}.
\end{equation}
This surjective morphism (\ref{cosection.bundle.map}) induces a morphism from the bundle stack 
$\mathbf{E}|_{U}$ to $\CC_U$. 
Let 
$$\mathbf{E}(\sigma):=\mathbf{E}|_{F}\cup \ker(\mathbf{E}|_{U}\to\CC_U).$$
Let $\cc_{X}$ be the intrinsic normal cone of $X$.
As proved in \cite{KL}, the intrinsic normal cone $\cc_{X}\subset \mathbf{E}(\sigma)$.
Applying the localized Gysin map in Section 3 of \cite{KL},  we get the zero dimensional localized virtual cycle 
$$s^{!}_{\mathbf{E},\sigma}([\cc_{X}])=[F]^{\vir}_{\loc,\KL}\in A_0(F).$$
Let $\iota: F\to X$ be the inclusion. 
This cycle satisfies $\iota_*([F]^{\vir}_{\loc,\KL})=[X]^{\vir}$ if $X$ is projective.

\begin{definition}
If the  scheme or DM stack $F$ is proper, then  the virtual localized invariant of $X$ is defined by
$$\int_{[F]^{\vir}_{\loc,\KL}}1.$$
\end{definition}

\subsection{Cone decomposition}

Recall that we fix an embedding 
$i: X\hookrightarrow M$
of $X$ to a smooth scheme $M$.   Hence there is a surjective morphism:
$\phi: \Omega_{M}|_{X}\to \Omega_X$.  From Section \ref{symmetric_obstruction_theory}, 
the obstruction cone $C\subset \Omega_{M}|_{X}$ is a conic Lagrangian cycle in $\Omega_{M}$, which is  proved in Theorem 4.9 of \cite{Behrend}.  

\begin{definition}\label{Decomposition_C}
Define
$$C_2:=\overline{C|_{X\setminus F}}.$$
and 
$$[C_1]:=[C]-[C_2]$$
as cycles. 
\end{definition}
\begin{remark}
The cone $C_1$ is supported on  the fixed point locus  $F$ of $X$.  
\end{remark}

\subsection{Kiem-Li localized invariant via bundle stack}

First we prove the following:
\begin{proposition}
The surjective morphism $\phi: \Omega_{M}|_{X}\to \Omega_{X}$ induces a surjective morphism 
$$\Phi: \Omega_{M}|_{X}\to\mathbf{E}$$
to the bundle stack 
$\mathbf{E}$. 
\end{proposition}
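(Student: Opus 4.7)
The plan is to lift $\phi$ through a smooth atlas of $\mathbf{E}$ étale-locally, verify the local lifts are canonically $2$-isomorphic, glue them to a global morphism of bundle stacks, and finally read off surjectivity from surjectivity of $\phi$.

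First I unpack the target. The symmetry $\Theta \colon E_X \xrightarrow{\cong} E_X^{\vee}[1]$ lets me rewrite $\mathbf{E} = h^1/h^0(E_X^{\vee}) \cong [E^0/E^{-1}]$, where $E_X$ is represented by a $2$-term resolution $[E^{-1} \xrightarrow{f} E^0]$ of locally free sheaves and $E^{-1}$ acts on $E^0$ by translation via $f$. The coarse moduli sheaf of $\mathbf{E}$ is $\mathrm{coker}(f) = h^0(E_X) = \Omega_X$, and $q \colon E^0 \twoheadrightarrow \mathbf{E}$ is the canonical smooth atlas.

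To build $\Phi$, on any affine étale chart $U \to X$ local freeness of $\Omega_M|_X$ gives $\mathrm{Ext}^1(\Omega_M|_X|_U, \ker(E^0 \twoheadrightarrow \Omega_X)|_U) = 0$, so $\phi|_U$ lifts to a sheaf map $\tilde\phi_U \colon \Omega_M|_X|_U \to E^0|_U$ with $\phi|_U = (E^0 \to \Omega_X) \circ \tilde\phi_U$. Setting $\Phi_U := q \circ \tilde\phi_U$ gives a local morphism to $\mathbf{E}|_U$. Two such choices differ by a map into $\mathrm{image}(f)$ which, on a finer refinement, pulls back to a map $h \colon \Omega_M|_X \to E^{-1}$; such an $h$ is a chain homotopy between the associated morphisms of complexes $[0 \to \Omega_M|_X] \to [E^{-1} \to E^0]$ and hence furnishes a canonical $2$-isomorphism $\Phi_U \simeq \Phi_U'$ in $\mathbf{E}$. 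Standard descent for bundle stacks then glues the local $\Phi_U$'s into a global morphism $\Phi \colon \Omega_M|_X \to \mathbf{E}$.

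For surjectivity, working with the atlas $q$ one computes
\[ \Omega_M|_X \times_{\mathbf{E}} E^0 \;=\; \{(a,e) \in \Omega_M|_X \times E^0 : \phi(a) = (E^0 \to \Omega_X)(e)\}, \]
and the projection onto $E^0$ hits every $e \in E^0$ exactly when $\phi$ is surjective. This is the cokernel statement in the conormal sequence $I/I^2 \xrightarrow{d} \Omega_M|_X \to \Omega_X \to 0$ for $X \hookrightarrow M$, so $\Phi$ is an epimorphism of bundle stacks over $X$.

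The main obstacle is the descent step in the construction of $\Phi$. Rephrased in derived-category language, one must show that $\phi \in \mathrm{Hom}(\Omega_M|_X, h^0(E_X))$ lifts to an element of $\mathrm{Hom}_{D^b}(\Omega_M|_X, E_X)$, the obstruction being controlled by $\mathrm{Ext}^{\ge 1}(\Omega_M|_X, h^{-1}(E_X))$. This vanishes on affine refinements, so lifts exist étale-locally and the coherence of the resulting $2$-isomorphisms assembles them into the desired global morphism of bundle stacks.
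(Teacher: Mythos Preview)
Your argument is correct and shares the paper's outline---lift $\phi$ \'etale-locally through an atlas of $\mathbf{E}$, then glue---but the two proofs handle surjectivity differently. The paper first performs a rank reduction on the local resolution: by quotienting out an acyclic subcomplex it arranges that $F^1$ has rank exactly equal to the minimal number $g$ of generators of $\Omega_X$ at the chosen point. Any lift $\Omega_M|_X \to F^1$ is then automatically surjective as a map of vector bundles, since its composition with $F^1 \twoheadrightarrow \Omega_X$ is onto and $\rk F^1 = g$; surjectivity onto $[F^1/F^0]$ is then immediate from the atlas. You bypass this reduction entirely and read surjectivity off the stack structure via the fibre product with the atlas, which is more direct and does not require shrinking the chart or modifying the resolution. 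Conversely, the paper's gluing step is the one line ``the construction is canonical'', whereas you spell out the $2$-isomorphisms between local lifts and invoke descent; your treatment of that step is the more careful one.

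One small point: your description of $\Omega_M|_X \times_{\mathbf{E}} E^0$ suppresses the $2$-isomorphism data. Strictly the fibre product is $\Omega_M|_X \times_X E^{-1}$, with projection $(a,\xi) \mapsto \tilde\phi(a) + f(\xi)$ to $E^0$. Your conclusion is unaffected: given $e \in E^0$, surjectivity of $\phi$ furnishes $a$ with $\phi(a)$ equal to the image of $e$ in $\Omega_X$, whence $e - \tilde\phi(a) \in \mathrm{image}(f)$ and a preimage $\xi$ exists.
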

\begin{proof}
Let $\mathfrak{e}: \mathfrak{X}\to X$ be an \'etale  open covering. The the pullback 
$$\mathfrak{e}^*E_X\cong [F^{-1}\to F^0]$$
has a locally free resolution, and $F^{-1}, F^0$ are locally free coherent sheaves. 
Then $\mathfrak{e}^*E_X^\vee\cong [F_{0}\to F_1]$, with $F_{0}=(F^0)^\vee, F_1=(F^{-1})^\vee$ and  $\mathfrak{e}^*\mathbf{E}=[F_1/F_0]$. 

Hence we have the following exact sequence
$$F_0\to F_1\to \mathfrak{e}^*\ob\to 0.$$
Suppose that the ranks of $F_0$ and $F_1$ are $r_0$ and $r_1$. 
Let $g$ be the number of generators of the sheaf $\ob$ on one local chart around a point $P$.
 
Locally we can take $r_1=g$ without loss of generality.
(This is because the rank of the first arrow is $r_1-g$ at $P$, so we may choose a rank $r_1-g$ subbundle $S$ on which the map is of full rank (possibly after shrinking our open set). Then dividing out by the acyclic complex $S\to S$ we get the claim.)
 
Now our vector bundle $\Omega_M|_X$ surjects onto $\ob$. Since it is free, we can lift to a map $\Omega_M|_{X}\to F_1$.
Then the composition $\Omega_M|_{X}\to F_1\to \ob$ is onto, so in particular the first map must have rank $\ge g$, so it must be onto, so $\Omega_{M}|_{X}\to F_1$ is onto in the local chart and $\mathfrak{e}^*\Omega_{M}|_{X}$ maps onto the stack 
$\mathfrak{e}^*\mathbf{E}=[F_1/F_0]$. 
The construction is canonical so it induces a surjective morphism 
$\Phi: \Omega_{M}|_{X}\to \mathbf{E}$. 
\end{proof}

We construct the following diagram:
\begin{equation}
\xymatrix{
\Omega_{M}|_{X}\ar[r]^{\Phi}\ar[dr]& \mathbf{E}\ar[d]\ar[dr]^{\sigma} &\\
&\Omega_X\ar[r]^{\sigma}&\cO_X
}
\end{equation}
where $\sigma$ is the cosection map. 
To simplify notation,  we denote by  $V:=\Omega_{M}|_{X}$. 
Let 
$$V(\sigma):=V|_{F}\oplus \ker(V|_{U=X\setminus F}\to \cO_X|_{U}).$$
The Lagrangian cone $C$ lies in $V(\sigma)$.  Kiem-Li's localized virtual cycle 
$[F]^{\vir}_{\KL}$ is given by the localized Gysin map
\begin{equation}\label{KL_Gysin}
s^{!}_{V(\sigma), \sigma}([C])=[F]^{\vir}_{\loc, \KL}
\end{equation}
constructed in Section 3 of \cite{KL}. 
The following result is easily from the property of localized Gysin map. 
\begin{proposition}
$$s^{!}_{V(\sigma), \sigma}([C])=s^{!}_{V(\sigma), \sigma}([C_1])+s^{!}_{V(\sigma), \sigma}([C_2]).$$
\end{proposition}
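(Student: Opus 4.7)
The plan is to reduce the proposition to the linearity of Kiem--Li's localized Gysin map. First I would record that, by Definition~\ref{Decomposition_C}, we have an equality of cycles
\[
[C] = [C_1] + [C_2] \qquad \text{in } \mathcal{Z}_*(V),
\]
so the content of the proposition is really the assertion that the localized Gysin map $s^{!}_{V(\sigma),\sigma}$ respects this decomposition. Since that map is constructed in Section~3 of \cite{KL} as a group homomorphism from the group of cycles supported in $V(\sigma)$ to $A_*(F)$, once I know that each of $C$, $C_1$, $C_2$ is supported in $V(\sigma)$ the statement will follow at once from additivity.

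Next I would verify the support condition for each piece. The cone $C$ lies in $V(\sigma)$ by the cosection localization setup explained just before the proposition (the cosection $\sigma$ vanishes on $C$ over $U = X\setminus F$). For $C_2 = \overline{C|_{X\setminus F}}$, the part over $U$ is contained in $C|_U \subset \ker(V|_U \to \cO_U)$, while the part over $F$ lies automatically in $V|_F$; since $V(\sigma) = V|_F \cup \ker(V|_U \to \cO_U)$ is closed inside $V$, the closure $C_2$ stays in $V(\sigma)$. For $C_1 = [C] - [C_2]$, the construction forces $C_1$ to be supported over $F$, hence in $V|_F \subset V(\sigma)$.

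With both cycles supported in $V(\sigma)$, the linearity of $s^{!}_{V(\sigma),\sigma}$ immediately gives
\[
s^{!}_{V(\sigma),\sigma}([C]) \;=\; s^{!}_{V(\sigma),\sigma}([C_1]) + s^{!}_{V(\sigma),\sigma}([C_2]),
\]
which is the desired formula. I expect the only mildly subtle point to be confirming that $C_2$ stays in $V(\sigma)$ after taking closure in $V$; but this is automatic because the fibre of $V(\sigma)$ over every point of $F$ is all of $V$, so no limit point of $C|_U$ can escape $V(\sigma)$. No further computation is needed, as the additivity of the Kiem--Li localized Gysin map on the subgroup of cycles supported in $V(\sigma)$ is part of its definition.
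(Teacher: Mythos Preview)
Your proposal is correct and matches the paper's approach exactly: the paper gives no proof beyond the remark that the result ``is easily from the property of localized Gysin map,'' and your argument simply spells out that property, namely the additivity of $s^{!}_{V(\sigma),\sigma}$ on cycles supported in $V(\sigma)$, together with the verification that $C_1$ and $C_2$ are indeed so supported.
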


\subsection{Kiem-Li localized invariant via Lagrangian intersection}

\subsubsection{Type one cone:}
The cone $C_1$ inside $\Omega_{M}|_{X}$ supports on the fixed point loci $F\subset X$.  So $C_1$ lies inside 
$V|_{F}$.  The scheme $F$ is closed and proper. Hence from Kiem-Li \cite{KL}, the localized Gysin map is just the usual Gysin map and 
\begin{equation}\label{KL_C1}
s^{!}_{V(\sigma), \sigma}([C_1])=s^{!}_{V|_{F}}([C_1]).
\end{equation}

\subsubsection{Type two cone:}
Each irreducible component of the cone $C_2$ supports on a $\CC^*$-invariant subscheme of $X$. 
Using idea in the Appendix in \cite{KL}, we prove that the localized Gysin map $s^{!}_{V(\sigma), \sigma}([C_2])$ is still a Lagrangian intersection of two Lagrangian cycles inside 
$\Omega_{M}|_{X}$. 

We use a similar idea as in Section 4 of \cite{JThomas}. 
Choose a Hermitian metric on the vector bundle $\Omega_M$, and choose a smooth function
$$\psi:  M\to \RR$$ 
such that 
$\{x\in M: \psi(x)\leq t\}$ is compact for $t\in \RR$. 
We also choose a smooth  function $\rho: \RR\to \RR_{\geq 0}$ such that 
$$
\rho(r)=
\begin{cases}
0, & r\leq 1;\\
\epsilon r, & r\geq 2
\end{cases}
$$
Let $\Gamma:=\Gamma(d\rho\psi)$ be the graph of the differential of the function $\rho\psi$. 
Then $\Gamma$ is a small perturbation of the zero section of $\Omega_{M}$. 
The graph $\Gamma$  is a real Lagrangian cycle inside $\Omega_{M}$.  
From Theorem 9.5.3 and Theorem 9.7.11 of  \cite{KS}, the Lagrangian intersection $I(C_2, \Gamma)$ makes sense. 

\begin{theorem}\label{KL2_C2_Gamma}
The Lagrangian intersection $C_2\cap \Gamma$ supports on the fixed point loci $F\subset X$ and the intersection number  $I(C_2, \Gamma)$ is the same as Kiem-Li localized invariant $s^{!}_{V(\sigma),\sigma}([C_2])$. 
\end{theorem}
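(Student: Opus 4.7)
My plan is to prove the theorem in two stages: first to show that $C_2 \cap \Gamma$ is supported on the fixed locus $F \subset X$, and then to match the resulting intersection cycle with the Kiem-Li localized class $s^{!}_{V(\sigma),\sigma}([C_2])$.

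For the support claim, the starting observation is that on $U = X \setminus F$ the restriction $C_2|_U$ is contained in the kernel of the cosection $\sigma\colon V|_U \to \cO_U$, since by construction $C \subset V(\sigma)$ and the part of $C$ lying over $F$ has already been absorbed into $C_1$. It therefore suffices to arrange the perturbation so that $d(\rho\psi)$ does not lie in $\ker(\sigma)$ at any point of $U$. Using the Hermitian metric identification $\Omega_M \cong TM$, the cosection $\sigma$ corresponds, via the surjection $\Phi\colon \Omega_M|_X \to \mathbf{E}$ constructed in the previous proposition, to pairing a covector with the holomorphic vector field $v$ generating the $\CC^*$-action, so $\sigma$ annihilates $d\psi$ precisely when $v(\psi) = 0$. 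I would therefore choose $\psi$ to be a proper exhaustion that is strictly monotone along the $\RR_{>0}$ factor of $\CC^*$ on $U$ (for instance, one built from a Kähler potential associated to an equivariant embedding of $M$ into a $\CC^*$-representation whose weights are nontrivial in directions normal to $F$). Combined with compactness of $C_2 \cap \Gamma$ guaranteed by the growth of $\psi$, this forces every intersection point to project into $F$.

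For the second stage, both cycles live in $A_0(F)$ and must be compared. By its construction in Section 3 of \cite{KL}, $s^{!}_{V(\sigma),\sigma}$ is the ordinary Gysin pullback along the zero section modified by a cosection residue concentrated in a neighbourhood of $F$, while Theorem 9.5.3 and Theorem 9.7.11 of \cite{KS} express $I(C_2, \Gamma)$ as a sum of local Lagrangian intersection indices along $F$. My plan is to reduce to a local model around a point of $F$, in the spirit of Section 4 of \cite{JThomas}: locally trivialize $V$, split off the cosection direction furnished by $v$, and identify both contributions as the Euler class of the same rank-one subbundle restricted to $C_2 \cap \pi^{-1}(F)$. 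The specific profile of $\rho$ (constant near the origin and linear with slope $\epsilon$ far out) is exactly what guarantees that the real-analytic perturbation does not disturb the fine picture near $F$ while killing intersections at infinity, in perfect correspondence with the algebraic residue formula for the localized Gysin map.

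The hard part will be the second stage: producing a local comparison in which Kiem-Li's algebraic Gysin residue and the real-analytic Lagrangian intersection are computed by literally the same Euler-class integral. The delicate point is that $C_2$ is generally singular along $F$, so $\Gamma$ need not be transverse to each irreducible component, and a stratified argument will be needed to verify that the contribution of each stratum under the Hermitian perturbation matches the algebraic residue. This is where the Appendix of \cite{KL} and the techniques of Section 4 of \cite{JThomas} will be essential, and where I expect most of the technical work to concentrate.
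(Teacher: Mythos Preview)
Your strategy for the support claim is essentially the paper's own: the paper argues that $C_2$ lies in $\ker(\sigma)$ over $X\setminus F$ while $\Gamma$ does not, citing Lemma~4.4 of \cite{JThomas} for the latter fact rather than spelling out the choice of $\psi$ as you do. Your elaboration on how to choose $\psi$ monotone along the flow is a reasonable fleshing-out of that citation.

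For the second claim, however, you are planning considerably more work than the paper does. The paper's proof of the equality $I(C_2,\Gamma)=s^{!}_{V(\sigma),\sigma}([C_2])$ is a one-line citation: it is exactly the content of Proposition~A.1 in the Appendix of \cite{KL}, applied globally. Your proposed local-model reduction, stratified transversality analysis, and Euler-class matching would in effect be a re-derivation of that proposition. There is nothing mathematically wrong with this route, and carrying it out would indeed involve the ingredients you list; but since the result is already packaged in \cite{KL}, the paper simply invokes it. If your goal is to match the paper, replace your second stage with that citation; if your goal is an independent account, be aware that what you describe as ``the hard part'' is precisely what Kiem--Li's appendix already proves.
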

\begin{proof}
We can choose the smooth function $\psi:  M\to \RR$ such that $\psi$ is nonzero on $X\setminus F$.  
The composition 
$$\Omega_{M}|_{X}\to \Omega_{X}\to \cO_{X}$$
is the contraction with the Euler vector field associated with the $\CC^*$ action. 
When restricting to the graph $\Gamma$, this gives a function 
$$\epsilon\psi$$
outside the neighbourhood $\{\psi\leq 2\}$. 
This is nonzero, hence the intersection of $C_2$ with $\Gamma$ can not support on  the kernel of
$\Omega_{M}|_{X}\to \cO_X$ of the cosection on the locus $X\setminus F$.  But the cone $C_2$ lies in the kernel $\Omega_{M}|_{X}\to \cO_X$. Hence  $C_2\cap \Gamma$ must support on the locus $F\subset X$. 

The intersection number $I(C_2, \Gamma)=s^{!}_{V(\sigma),\sigma}([C_2])$ is a global result of Proposition A.1 of the Appendix in \cite{KL}. 
\end{proof}

\subsection{Behrend VS Kiem-Li}

Recall that in Section \ref{weighted_Euler_char} there is  a canonical  integral cycle $\mathfrak{c}_X$ for the scheme or DM stack 
$X$. The cycle $\mathfrak{c}_X$ has a decomposition:
$$\mathfrak{c}_X=\mathfrak{c}_1+\mathfrak{c}_2,$$
where 
$\mathfrak{c}_2=\overline{\mathfrak{c}_X|_{X\setminus F}}$ and $\mathfrak{c}_1=\mathfrak{c}_X\setminus \mathfrak{c}_2$. 
By the uniqueness of the integral cycle $\mathfrak{c}_X$, the integral cycles $\mathfrak{c}_1$ and $\mathfrak{c}_2$ are unique.

Recall that the obstruction cone $C\subset \Omega_{M}|_{X}$ is a conic Lagrangian cycle and we have the cone decomposition 
$C=C_1+C_2$ in Definition \ref{Decomposition_C}.   Let 
$\mathfrak{L}_{X}(\Omega_M)$ be the subgroup of $Z_n(\Omega_M)$ generated by conic Lagrangian prime cycles which support on $X$, where $n=\dim(M)$. 

\begin{proposition}
We have:
$$L(\mathfrak{c}_1)=C_1, \quad L(\mathfrak{c}_2)=C_2.$$
\end{proposition}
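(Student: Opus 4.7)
The plan is to reduce the statement to the well-known identity $L(\mathfrak{c}_X)=C$ together with the observation that the isomorphism $L$ acts on each irreducible component separately, so that a decomposition by support on $X$ translates cleanly to a decomposition by support on the cone side.

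First I would expand $\mathfrak{c}_X=\sum_i a_i Z_i$ as a sum of distinct prime cycles with $a_i=(-1)^{\dim Z_i}\cdot\text{mult}(\pi(C_i))$, and record that $L(\mathfrak{c}_X)=\sum_i a_i(-1)^{\dim Z_i}N^*_{Z_i/M}$. Next I would invoke Behrend's description of the obstruction cone, recalled in Section~\ref{symmetric_obstruction_theory} and embedded in Diagram~(\ref{Key_Diagram}): the irreducible components of $C$ are exactly the conormal varieties $N^*_{Z_i/M}$ appearing above, with multiplicities matching the coefficients of $\mathfrak{c}_X$. This gives the identity $C=L(\mathfrak{c}_X)$, which is the equivariant-free version of the statement.

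Then I would partition the index set $I=I_1\sqcup I_2$ according to whether $Z_i\subset F$ or $Z_i\not\subset F$. Since $F\subset X$ is the $\CC^*$-fixed locus, it is closed and $\CC^*$-invariant; each prime cycle $Z_i$ is itself $\CC^*$-invariant because both $C$ and $\mathfrak{c}_X$ are canonically defined and hence $\CC^*$-equivariant, so the partition is well defined. On the $X$-side, by definition $\mathfrak{c}_2=\overline{\mathfrak{c}_X|_{X\setminus F}}=\sum_{i\in I_2}a_iZ_i$ and therefore $\mathfrak{c}_1=\sum_{i\in I_1}a_iZ_i$. On the cone side, the projection $\pi\colon N^*_{Z_i/M}\to Z_i$ is surjective, so $N^*_{Z_i/M}|_{X\setminus F}$ is dense in $N^*_{Z_i/M}$ precisely when $Z_i\not\subset F$; hence $C_2=\overline{C|_{X\setminus F}}=\sum_{i\in I_2}a_i(-1)^{\dim Z_i}N^*_{Z_i/M}$, and $C_1=C-C_2=\sum_{i\in I_1}a_i(-1)^{\dim Z_i}N^*_{Z_i/M}$.

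Comparing the two expressions term by term gives $L(\mathfrak{c}_1)=C_1$ and $L(\mathfrak{c}_2)=C_2$. The main conceptual point, and the only place one has to be slightly careful, is that the canonical cycle $\mathfrak{c}_X$ and the obstruction cone $C$ are matched component-by-component under $L$, so there is no ambiguity about which side of the $F$/$X\setminus F$ decomposition a given component lands in. I expect this equivariance and the component-wise matching to be the only real step; everything else is essentially bookkeeping of signs and multiplicities already encoded in the definition of $L$.
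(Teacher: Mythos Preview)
Your argument is correct and follows essentially the same route as the paper: both proofs use that every irreducible conic Lagrangian component of $C$ is a conormal variety $N^*_{Z/M}$, split the components according to whether the support $Z$ lies in $F$ or meets $X\setminus F$, and then read off the matching via the isomorphism $L$ (equivalently~(\ref{cycle_Lagrangian}) and~(\ref{Lagrangian_cycle})). One small slip: in your coefficient $a_i$ you wrote $\mathrm{mult}(\pi(C_i))$ where it should be $\mathrm{mult}(C_i)$, the multiplicity of the component of the cone itself; with that correction the bookkeeping goes through exactly as you wrote.
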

\begin{proof}
The subgroup $\mathfrak{L}_{X}(\Omega_M)$ is generated by conic Lagrangian prime cycles, and  from \cite{Ginzburg}, also \cite{Behrend}, every irreducible conic Lagrangian cycle in $\Omega_{M}|_{X}$ is of the form $N^*_{Z/M}$.  Our cones $C_1, C_2$ are conic Lagrangian in $\Omega_{M}|_{X}$.  So each irreducible component of $C_1$ (resp. $C_2$) is of the form $N^*_{Z/M}$, where in the case of $C_2$, $N^*_{Z/M}$ is supported on $Z\subset \overline{X\setminus F}$, and in the case of $C_1$, $N^*_{Z/M}$ is supported on $Z\subset M$.  Thus the results follows from the isomorphisms in (\ref{cycle_Lagrangian}) and (\ref{Lagrangian_cycle}).
\end{proof}

\begin{definition}
Define
$$\nu_1=\eu(\mathfrak{c}_1), \quad  \nu_2=\eu(\mathfrak{c}_2).$$
\end{definition}
\begin{remark}
From the property of Euler obstruction, $\eu(\mathfrak{c}_N)=\eu(\mathfrak{c}_1)+\eu(\mathfrak{c}_2)$. 
Hence the Behrend function $\nu_X=\nu_1+\nu_2$.
\end{remark}

\begin{theorem}\label{Kai_KL1}
We have:
$$\chi(F, \nu_1|_{F})=s^{!}_{V(\sigma),\sigma}([C_1]).$$
\end{theorem}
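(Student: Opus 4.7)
The plan is to reduce the claim to MacPherson's index theorem for the proper DM stack $F$, by unpacking both sides as intersections inside $\Omega_{M}|_{F}$ and matching them through the Key Diagram (\ref{Key_Diagram}). First I would invoke equation (\ref{KL_C1}) to replace the Kiem--Li localized Gysin map by the ordinary Gysin map of the zero section of $V|_{F}=\Omega_{M}|_{F}$, giving
\begin{equation*}
s^{!}_{V(\sigma),\sigma}([C_{1}])=s^{!}_{V|_{F}}([C_{1}])\in A_{0}(F).
\end{equation*}
Then the preceding proposition provides the decomposition $\mathfrak{c}_{1}=\sum_{i} a_{i} Z_{i}$ with each $Z_{i}\subset F$, together with the Lagrangian decomposition $C_{1}=\sum_{i}(-1)^{\dim Z_{i}} a_{i}\, N^{*}_{Z_{i}/M}$, so it suffices to analyze each conic Lagrangian prime cycle $N^{*}_{Z_{i}/M}$ separately.

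The key technical step is a compatibility of Gysin maps: for $Z\subset F\subset M$, the cycle $N^{*}_{Z/M}$ is supported inside $V|_{F}\subset \Omega_{M}$, and I would show
\begin{equation*}
\deg_{F}\bigl(s^{!}_{V|_{F}}[N^{*}_{Z/M}]\bigr) = \deg_{M}\bigl(0^{!}_{\Omega_{M}}[N^{*}_{Z/M}]\bigr) = I(N^{*}_{Z/M},[M]).
\end{equation*}
This follows from the base change property of Gysin maps along the Cartesian square of zero sections $V|_{F}=V\times_{M}F$, and identifies the restricted Gysin degree on $F$ with the usual Lagrangian intersection with the zero section of $\Omega_{M}$.

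From the proof of the Key Diagram, $I(N^{*}_{Z/M},[M])=(-1)^{\dim Z}\deg c^{M}_{0}(Z)$. Summing over $i$ and combining two factors of $(-1)^{\dim Z_{i}}$ yields
\begin{equation*}
\deg_{F} s^{!}_{V|_{F}}[C_{1}] = \sum_{i} a_{i}\, \deg c^{M}_{0}(Z_{i}).
\end{equation*}
Since $F$ is proper, MacPherson's index theorem for proper DM stacks (recalled in Section \ref{Global_index_Lagrangian_intersection}) applies to each $Z_{i}\subset F$ and gives $\deg c^{M}_{0}(Z_{i})=\chi(F,\eu(Z_{i}))$. Additivity of the Euler obstruction then assembles the sum into $\chi\bigl(F,\eu(\mathfrak{c}_{1})\bigr)=\chi(F,\nu_{1}|_{F})$, which is the desired identity.

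The main obstacle will be establishing the Gysin-map compatibility cleanly, and tracking the signs coming from the isomorphism $L$ in (\ref{cycle_Lagrangian}) against the sign $(-1)^{\dim Z}$ appearing in the proof of the Key Diagram; once these are aligned, the theorem reduces to the standard index theorem on the proper DM stack $F$.
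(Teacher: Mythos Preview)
Your proposal is correct and follows essentially the same approach as the paper: both invoke (\ref{KL_C1}) to reduce the Kiem--Li localized Gysin map to the ordinary intersection with the zero section of $V|_{F}$, and then identify this with the weighted Euler characteristic via the Lagrangian-intersection/Chern--Mather formalism on the proper stack $F$. The paper's proof is terser, simply citing Behrend's Proposition~4.6 and Maulik--Treumann for the DM stack case, whereas you have unpacked that citation by decomposing $C_{1}$ into conic Lagrangian prime cycles and applying the Key Diagram and MacPherson's index theorem componentwise; the Gysin compatibility you flag is immediate from base change along $V|_{F}=\Omega_{M}\times_{M}F$, and the sign bookkeeping is exactly as you describe.
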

\begin{proof}
The Lagrangian cycle $[C_1]$ only supports on $F$. From (\ref{KL_C1}), $s^{!}_{V(\sigma),\sigma}([C_1])=s^{!}_{V|_{F}}([C_1])$, the usual Lagrangian intersection of $C_1$ with the zero section of $V|_{F}$.  In the case that $M$ is a scheme, or a finite group quotient stack,  or a finite gerbe over a scheme, this is Behrend's Proposition 4.6 in \cite{Behrend}.  In the DM stack case, it is due to Maulik and Treumann in \cite{MT}, where the main result is the generalization of  Kashiwara index theorem to orbifolds. 
\end{proof}

Recall our Lagrangian cycle $\Gamma\subset \Omega_{M}$, which is a small perturbation of the zero section. 
\begin{theorem}\label{Kai_KL2}
We have:
$$\chi(F, \nu_2|_{F})=I(C_2, \Gamma).$$
\end{theorem}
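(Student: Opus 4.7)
The plan is to combine the Lagrangian intersection diagram (\ref{Key_Diagram}) with the microlocal index theorem of Kashiwara-Schapira, localized to the compact set $F$. Under the horizontal isomorphisms of (\ref{Key_Diagram}), the proposition just proved identifies $C_2 = L(\mathfrak{c}_2)$ with the constructible function $\nu_2 = \eu(\mathfrak{c}_2)$. So the natural candidate for a ``weighted Euler characteristic interpretation'' of any Lagrangian intersection involving $C_2$ is an Euler characteristic weighted by $\nu_2$.

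First, I would record the key support statement from Theorem \ref{KL2_C2_Gamma}: $C_2 \cap \Gamma$ lies entirely over $F \subset X$, which is proper, so $I(C_2,\Gamma)$ is a well-defined integer computed as a sum of local indices concentrated on $F$. Next, I would invoke Theorems 9.5.3 and 9.7.11 of \cite{KS} (the microlocal index theorem) in the same spirit as Theorem \ref{main_theorem_1}: because $\Gamma = \Gamma(d\rho\psi)$ is a real analytic Lagrangian with the compactness assumption $\{\psi \leq t\}$ compact, the intersection pairing $I(C_2,\Gamma)$ computes the weighted Euler characteristic of the constructible function associated to $C_2$ via the characteristic cycle correspondence over the compact intersection region. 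By Step 1 this constructible function is $\nu_2$, and over the neighborhood of $F$ where the intersection is concentrated, the microlocal index formula gives $I(C_2,\Gamma) = \chi(N, \nu_2|_N)$ for a suitable compact neighborhood $N$ of $F$.

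To finish, I would use $\CC^*$-equivariance to collapse this to $F$ itself. The cycle $\mathfrak{c}_2 = \overline{\mathfrak{c}_X|_{X\setminus F}}$ is $\CC^*$-invariant, hence so is $\nu_2$. Since the $\CC^*$-action on $X \setminus F$ has no fixed points, every nontrivial $\CC^*$-orbit outside $F$ contributes $0$ to the Euler characteristic weighted by the invariant function $\nu_2$, so $\chi(N \setminus F, \nu_2|_{N \setminus F}) = 0$, giving $\chi(N, \nu_2|_N) = \chi(F,\nu_2|_F)$. Combined with the microlocal step this yields $I(C_2,\Gamma) = \chi(F,\nu_2|_F)$.

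The main obstacle is Step 2: justifying the microlocal index formula when $X$ itself need not be proper. The standard form of the Kashiwara-Schapira theorem requires either global properness or a compactness hypothesis on the intersection; the clever choice of $\Gamma = \Gamma(d\rho\psi)$ with $\psi$ a proper exhaustion (so that $d\rho\psi$ is bounded away from $C_2$ outside a compact set on $X \setminus F$ via the cosection obstruction) is precisely what makes the intersection compact and supported on $F$. The delicate point is verifying that the local microlocal contribution at each $p \in F$ matches $\nu_2(p)$, which follows from the fact that the Euler-obstruction-to-Lagrangian correspondence $\mathrm{Ch}$ in (\ref{Key_Diagram}) is the restriction of the global characteristic cycle construction and is therefore local on $X$; one can reduce to the proper case on a $\CC^*$-invariant compact neighborhood and apply Behrend's Proposition 4.6 of \cite{Behrend} (or Maulik-Treumann \cite{MT} in the DM stack case), exactly as was done in Theorem \ref{Kai_KL1}.
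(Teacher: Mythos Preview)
Your proposal is correct and takes essentially the same approach as the paper: identify $C_2$ as the characteristic cycle of $\nu_2$ via the correspondence in Diagram~(\ref{Key_Diagram}), then invoke the Kashiwara--Schapira microlocal index theorem (Theorem~9.7.11 of \cite{KS}) in the scheme case and Maulik--Treumann \cite{MT} in the DM stack case. Your version is considerably more detailed---in particular you make explicit the $\CC^*$-equivariance reduction from a compact neighborhood of $F$ down to $F$ itself---whereas the paper's two-sentence proof absorbs all of this into the citation of \cite{KS} and \cite{MT}.
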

\begin{proof}
In the case that $M$ is a scheme, this is the global index theorem due to  Theorem 9.7.11 of Kashiwara-Schapira \cite{KS} since the characteristic cycle of $\nu_2$ is $C_2$.  In the DM stack case, it is still due to Maulik and Treumann in \cite{MT}.  
\end{proof}

So we finally have:

\begin{theorem}\label{main_theorem_2}
If the  $\CC^*$ fixed locus $F$ is proper, then 
we have:
$$\chi(F, \nu_X|_{F})=\int_{[F]^\vir_{\loc, \KL}}1.$$
\end{theorem}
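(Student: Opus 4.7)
The plan is to prove Theorem~\ref{main_theorem_2} by assembling the pieces developed in the preceding subsections, using the two decompositions $\nu_X=\nu_1+\nu_2$ and $[C]=[C_1]+[C_2]$ as the organizing principle. The point is that each of the two summands has a separate geometric interpretation — one purely supported on the fixed locus $F$, the other supported on orbit closures — and the Kiem--Li localized cycle splits along the same decomposition by linearity of the localized Gysin map.

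First, I would reduce the weighted Euler characteristic to a sum over the fixed locus. Since $\nu_X|_F=\nu_1|_F+\nu_2|_F$ (by the corresponding decomposition of the canonical integral cycle $\mathfrak{c}_X=\mathfrak{c}_1+\mathfrak{c}_2$ and additivity of $\eu$), and since $\nu_X$ is constant on nontrivial $\CC^*$-orbits so that $\chi(X,\nu_X)=\chi(F,\nu_X|_F)$, I would write
\begin{equation*}
\chi(F,\nu_X|_F)=\chi(F,\nu_1|_F)+\chi(F,\nu_2|_F).
\end{equation*}
By Theorem~\ref{Kai_KL1}, the first term equals $s^{!}_{V(\sigma),\sigma}([C_1])$, because $C_1$ is already supported on $F$ and there the cosection-localized Gysin map reduces to the ordinary intersection with the zero section of $V|_F$. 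By Theorem~\ref{Kai_KL2}, the second term equals $I(C_2,\Gamma)$, the analytic Lagrangian intersection of $C_2$ with the perturbation $\Gamma=\Gamma(d\rho\psi)$ of the zero section.

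Next, I would convert the analytic intersection back to the algebraic localized Gysin pullback. This is precisely the content of Theorem~\ref{KL2_C2_Gamma}: because $C_2$ lies inside the kernel $V(\sigma)$ of the cosection on $X\setminus F$ while $\Gamma$ does not, the intersection $C_2\cap \Gamma$ is automatically compactly supported on $F$, and the Appendix of \cite{KL} identifies this intersection number with $s^{!}_{V(\sigma),\sigma}([C_2])$. Combining, I obtain
\begin{equation*}
\chi(F,\nu_X|_F)=s^{!}_{V(\sigma),\sigma}([C_1])+s^{!}_{V(\sigma),\sigma}([C_2]).
\end{equation*}

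Finally, I invoke the linearity of the localized Gysin map to recombine the two terms into $s^{!}_{V(\sigma),\sigma}([C])$, which by \eqref{KL_Gysin} is exactly $\deg[F]^{\vir}_{\loc,\KL}=\int_{[F]^{\vir}_{\loc,\KL}}1$. The only real obstacle in this scheme is the second step, identifying $\chi(F,\nu_2|_F)$ with $I(C_2,\Gamma)$: this is where the noncompactness of $C_2$ forces us to leave the algebraic world and use the perturbation $\Gamma$ together with the Kashiwara--Schapira index formula (and, in the DM stack setting, the orbifold extension of Maulik--Treumann). Once that bridge is in place the rest is formal bookkeeping.
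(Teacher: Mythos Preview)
Your proposal is correct and follows essentially the same route as the paper: the paper's proof is the one-line ``Combining Theorem~\ref{KL2_C2_Gamma}, Theorem~\ref{Kai_KL1} and Theorem~\ref{Kai_KL2}, the result follows,'' and you have simply unpacked that combination explicitly via the decompositions $\nu_X=\nu_1+\nu_2$ and $[C]=[C_1]+[C_2]$ together with linearity of the localized Gysin map.
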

\begin{proof}
Combining Theorem \ref{KL2_C2_Gamma},  Theorem \ref{Kai_KL1} and Theorem \ref{Kai_KL2}, the result follows. 
\end{proof}
\begin{remark}
Actually to prove this main result we don't need to split cones as in the proof in Section 4 in \cite{JThomas}.  To make things clearer, we do it here. 
\end{remark}

\subsection{$\CC^*$-action with Isolated fixed points}

In this section we assume that  the $\CC^*$-action on the scheme $X$ has isolated fixed point set.  In Theorem 3.4 of \cite{BF},  Behrend and
Fantechi proved the  following result: 
 
\begin{proposition}
Let $X$ be a scheme with a $\CC^*$ action so that $(E_X\to \LL_X, \Theta)$ is a $\CC^*$-equivariant symmetric obstruction theory. 
Suppose that the $\CC^*$ action only has isolated fixed point set $F$. Let $P\in F$. Then 
$$\nu_X(P)=(-1)^{\dim(T_X|_{P})},$$
where $T_X|_{P}$ is the tangent space of $X$ at $P$. 
\end{proposition}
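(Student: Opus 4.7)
The plan is to reduce to the local almost-closed one-form model supplied by the previous proposition and then use $\CC^*$-equivariance to force the relevant Milnor-fiber Euler characteristic to vanish.

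First, I would apply the local structure result (Behrend--Fantechi) to replace $X$ by a $\CC^*$-invariant affine neighborhood $U$ of $P$ that admits an equivariant closed embedding $U \hookrightarrow M$ into a smooth $\CC^*$-scheme $M$ of dimension $n := \dim T_X|_P$, together with an invariant almost closed one-form $\omega$ on $M$ with $U = Z(\omega)$. Because $\nu_X$ is local and étale-invariant, it suffices to compute $\nu_U(P)$, and from now on I may assume $X = Z(\omega) \subset M$ globally.

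Second, I would extract the key consequence of the assumption $\dim M = \dim T_X|_P$: this equality forces $T_M|_P = T_X|_P$ as $\CC^*$-representations. Since $P$ is an isolated $\CC^*$-fixed point of $X$ and any positive-dimensional $\CC^*$-fixed subscheme of $M$ passing through $P$ would contribute a nonzero $\CC^*$-weight-zero subspace of $T_M|_P$, it follows that all $\CC^*$-weights on $T_M|_P$ are nonzero. In particular, $P$ is also isolated in $M^{\CC^*}$ locally.

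Third, I would translate the Behrend function at $P$ into a Milnor-fiber computation. Let $v$ be the vector field on $M$ generating the $\CC^*$-action. Because $\omega$ is $\CC^*$-invariant, the contraction $f := \iota_v \omega$ is a regular function on $M$ with $P$ as isolated critical point, and standard Behrend-function arguments for symmetric obstruction theories presented by an (almost) closed invariant form give
\begin{equation*}
\nu_X(P) \;=\; (-1)^{n}\bigl(1 - \chi(MF_P(f))\bigr),
\end{equation*}
where $MF_P(f)$ is the Milnor fiber of $f$ at $P$. In the exact case $\omega = dg$ this is Behrend's original formula; in the merely almost-closed case one reduces to it by a $\CC^*$-equivariant deformation of $\omega$ that preserves the property of having $P$ as isolated zero.

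Fourth, I would conclude by observing that $f$ is $\CC^*$-invariant and all weights on $T_M|_P$ are nonzero, so the Milnor fiber $MF_P(f)$ carries a free $S^1 \subset \CC^*$ action. Hence $\chi(MF_P(f)) = 0$, and therefore $\nu_X(P) = (-1)^n = (-1)^{\dim T_X|_P}$, as claimed. The principal obstacle is the Milnor-fiber formula in the third step: one must verify that Behrend's identification of $\nu_X$ with the vanishing cycle Euler characteristic continues to hold with $f = \iota_v \omega$ when $\omega$ is only almost closed, which is the heart of Behrend--Fantechi's Theorem~3.4 and typically handled by a small equivariant perturbation reducing to the exact situation.
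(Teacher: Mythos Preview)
Your approach differs from the paper's. The paper derives the proposition from its main Theorem \ref{main_theorem_2} (the identification of Behrend's weighted Euler characteristic with the Kiem--Li localized invariant): one takes a $\CC^*$-equivariant embedding $P\hookrightarrow X\subset M$ with $\dim M=\dim T_X|_P$, reduces via deformation to the normal cone, and then reads off the sign $(-1)^{\dim M}$ from the Kiem--Li computation in Example~2.4 of \cite{KL} for a cosection on a vector bundle over a point. You instead attempt the original Behrend--Fantechi route through a Milnor-fiber formula, which is a legitimate alternative strategy.

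However, your third step contains a genuine error. You set $f:=\iota_v\omega$ and invoke Behrend's formula $\nu_X(P)=(-1)^n\bigl(1-\chi(MF_P(f))\bigr)$. But Behrend's formula applies to a \emph{potential} $g$ with $X=\mathrm{Crit}(g)$, not to the contraction $\iota_v\omega$. In the exact case $\omega=dg$ that you cite as the base of the argument, invariance of $\omega$ forces $g$ itself to be $\CC^*$-invariant (since $v(g)$ is then a holomorphic constant, hence zero), and therefore
\[
\iota_v\omega=\iota_v\,dg=v(g)\equiv 0.
\]
So your $f$ vanishes identically, $P$ is certainly not an isolated critical point of it, and $MF_P(f)$ carries no information. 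In the merely almost-closed case the situation is no better: a direct computation in linear coordinates shows $\mathrm{Crit}(\iota_v\omega)\neq Z(\omega)$ in general, so there is no mechanism linking $MF_P(\iota_v\omega)$ to $\nu_X(P)$.

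What the Behrend--Fantechi argument actually does is use the $\CC^*$-action with all weights nonzero to produce a genuine invariant potential $g$ with $Z(dg)=X$ near $P$ (this is where the ``equivariant reduction to the exact case'' really lives, and it is not a small perturbation of $\iota_v\omega$). Your steps one, two, and four are fine and would then conclude the proof with $g$ in place of $f$: the Milnor fiber of the invariant $g$ inherits a fixed-point-free $\CC^*$-action, so $\chi(MF_P(g))=0$ and $\nu_X(P)=(-1)^n$. But as written, the choice $f=\iota_v\omega$ breaks the argument.
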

\begin{proof}
The formula in the theorem is a local issue question. 
We prove the  result  using Theorem \ref{main_theorem_2}. 
The symmetric obstruction theory $E_X=[E^{-1}\to E^0]$ is given by a global resolution. 
We may use $\CC^*$ equivariant embedding
$$P\hookrightarrow X\subset M,$$
where we assume that the dimension of $M$ is the same as $\dim(T_X|_{P})$. 
Using deformation to normal cone $C_{X/M}$ and the deformation invariance of virtual class, we may use the zero section $[M]$ to do the Kiem-Li localization. 
So in this case we have a vector bundle $\Omega_M$ and a cosection map
$\sigma: \Omega_M\to \cO_M$ such that $\sigma^{-1}(0)=P$. Then the result comes from 
Example 2.4 in \cite{KL}. 
\end{proof}

%%%%%%%%%%%%%%
\bibliographystyle{amsplain}
\begin{bibdiv}
  \begin{biblist}
\bib{Aluffi}{article}{
      author={Aluffi, Paolo},
      title={Limits of Chow groups, and a new construction of Chern-Schwartz-MacPherson classes},
      journal={Pure Appl. Math. Q.},
      volume={2},
      date={2006},
      number={4},
      pages={915-941},
      issn={0002-9327},
      }

\bib{Behrend}{article}{
      author={Behrend, Kai},
      title={Donaldson-Thomas type invariant via microlocal geometry},
      journal={Ann. Math.},
      volume={170},
      date={2009},
      number={3},
      pages={1307-1338},
      issn={0002-9327},
      }

     \bib{BF}{article}{
      author={Behrend, Kai},
      author={Fantechi, Barbara},
      title={The intrinsic normal cone},
      journal={Invent. Math.},
      volume={},
      date={1997},
      number={},
      pages={1337--1398},
      issn={0002-9327},
      }
      
         \bib{BF2}{article}{
      author={Behrend, Kai},
      author={Fantechi, Barbara},
      title={Symmetric obstruction theories and Hilbert scheme of point on threefolds},
      journal={Algebra Number Theory},
      volume={2},
      date={2008},
      number={3},
      pages={313-345},
      issn={0002-9327},
      }

\bib{Jiang}{article}{
      author={Jiang, Yunfeng},
      title={The Pro-Chern-Schwartz-MacPherson class for DM stacks},
      journal={Pure and Applied Mathematics Quarterly}, 
      volume={11},
      date={2015},
      number={1},
      pages={87-114, arXiv:1412.3724},
      issn={arXiv:1412.3724},
      }

\bib{JThomas}{article}{
      author={Jiang, Yunfeng},
      author={Thomas, Richard P},
      title={Virtual signed Euler characteristics},
      journal={Journal of Algebraic Geometry},
      volume={26},
      date={(2017)},
      number={},
      pages={379-397},
      issn={arXiv:1408.2541},
      }

    \bib{KL}{article}{
      author={Kiem, Young-Hoon},
      author={Li, Jun},
      title={Localizing virtual cycles by cosection},
      journal={J. Amer. Math. Soc.},
      volume={26},
      date={2013},
      number={},
      pages={1025-1050},
      issn={},
      }

      \bib{Fulton}{article}{
      author={Fulton, William},
      title={Intersection theory},
      journal={2nd ed., Ergebnisse der Mathematik und ihrer Grenzgebiete. 3. Folge},
      volume={},
      date={1998},
      number={ },
      pages={A Series of Modern Surveys in Mathematics [Results in Mathematics and Related Areas. 3rd Series. A Series of Modern Surveys in Mathematics], vol. 2, Springer-Verlag, Berlin, 1998},
      issn={},
      review={},
         }

         \bib{Ginzburg}{article}{
      author={Ginzburg,  Victor},
      title={Characteristic varieties and vanishing cycles},
      journal={Invent. Math.},
      volume={84},
      date={1986},
      number={},
      pages={327-402},
      issn={0002-9327},
         }

 \bib{Kennedy}{article}{
      author={Kennedy, Gary},
      title={Macpherson's chern classes of singular algebraic varieties},
      journal={Communications in Algebra},
      volume={18:9},
      date={1990},
      number={},
      pages={2821-2839},
      issn={},
    }

\bib{KS}{article}{
      author={Kashiwara,  Masaki},
      author={Schapira, Pierre},
      title={Sheaves on manifolds},
      journal={Grundlehren der Mathematischen Wissenschaften.},
      volume={292},
      date={Springer-Verlag, Berlin, 1990},
      number={},
      pages={},
      issn={0002-9327},
    }

\bib{LT}{article}{
      author={Li, Jun},
      author={Tian, Gang},
      title={The virtual fundamental class for algebraic varieties},
      journal={J. Amer. Math. Soc.},
      volume={},
      date={1998},
      number={},
      pages={},
      issn={0002-9327},
    }

    \bib{LQ}{article}{
      author={Li, Weiping},
      author={Qin, Zhenbo},
      title={Donaldson-Thomas invariants of certain Calabi-Yau 3-folds},
      journal={Commu. in Anal. and Geom.},
      volume={21},
      date={2013},
      number={},
      pages={541-578},
      issn={0002-9327},
    }

    \bib{MacPherson}{article}{
      author={MacPherson, Robert},
      title={Chern class for singular algebraic varieties},
      journal={Ann. Math.},
      volume={100},
      date={1974},
      number={2},
      pages={423-432},
      issn={0002-9327},
    }

\bib{MT}{article}{
      author={Maulik, Devash},
      author={Treumann, David},
      title={Constructible functions and Lagrangian cycles on orbifolds},
      journal={},
      volume={},
      date={},
      number={},
      pages={arXiv:1110.3866},
      issn={0002-9327},
    }

    \bib{Sprinberg}{article}{
      author={Gonzalez-Sprinberg, G.},
      title={L'obstruction locale d'Euler et le theoreme de MacPherson},
      journal={Asterisque},
      volume={82-83},
      date={1981},
      number={},
      pages={7-32},
      issn={0002-9327},
    }

 \bib{Thomas}{article}{
      author={Thomas, Richard P},
      title={A holomorphic Casson invariant for Calabi-Yau 3-folds,
                      and bundles on K3 fibrations},
      journal={J. Differential Geom.},
      volume={54},
      date={2000},
      number={},
      pages={367-438},
      issn={0002-9327},
    }

  \end{biblist}
\end{bibdiv}

\end{document}